\newtheorem{remark}[theorem]{Remark}
\newcommand{\calE}{\mathcal{E}}
\newcommand{\calT}{\mathcal{T}}
\newcommand{\norm}[1]{\| #1 \|}
\newcommand{\normDG}[1]{\| #1 \|_{\textrm{DG}}}
\newcommand{\normast}[1]{| #1 |_{*,h}}
\newcommand{\normh}[1]{\| #1 \|_{1,h}}
\newcommand{\abs}[1]{\left| #1 \right|}
\newcommand{\dA}{\ \textrm{dA}}
\newcommand{\ds}{\ \textrm{ds}}
\newcommand{\dx}{\ \textrm{dx}}
\newcommand{\dAh}{\ \textrm{dA}_{\textrm{h}}}
\newcommand{\dAhk}{\ \textrm{dA}_{\textrm{hk}}}
\newcommand{\dsh}{\ \textrm{ds}_{\textrm{h}}}
\newcommand{\dshk}{\ \textrm{ds}_{\textrm{hk}}}
\newcommand{\half}{\frac{1}{2}}
\newcommand{\WT}[1]{\widetilde{#1}}
\newcommand{\WH}[1]{\widehat{#1}}
\newcommand{\ADG}{\mathcal{A}_{h}^{k}}
\newcommand{\AC}{\mathcal{A}}
\def\@avgsp{\mathchoice{{}\mkern-6mu}{{}\mkern-6mu}{{}\mkern-6mu}{}}
\def\llaverage{\{\@avgsp\{}
\def\rraverage{\}\@avgsp\}}
\author{Paola F. Antonietti\thanks{MOX---Modeling and Scientific Computing, Dipartimento di Matematica, Politecnico di Milano, Milano, Italy, I-20133. Mail: paola.antonietti@polimi.it; simone.stangalino@polimi.it; marco.verani@polimi.it.} \and Andreas Dedner\thanks{Mathematics Institute and Centre for Scientific Computing, University of Warwick, Coventry CV4 7AL, UK. Mail: A.S.Dedner@warwick.ac.uk; P.Madhavan@warwick.ac.uk; Bjorn.Stinner@warwick.ac.uk.} \and Pravin Madhavan\footnotemark[2], \\ Simone Stangalino\footnotemark[1] \and Bj\"{o}rn Stinner\footnotemark[2] \and Marco Verani\footnotemark[1]}
\title{High order discontinuous Galerkin methods\\ on surfaces}
\begin{document}

\maketitle

\begin{abstract}
We derive and analyze high order discontinuous Galerkin methods for   
second-order elliptic problems on implicitely defined surfaces in $\mathbb{R}^{3}$. This is done by carefully adapting the unified discontinuous Galerkin framework of \cite{arnold2002unified} on a triangulated surface approximating the smooth surface. We prove optimal error estimates in both a (mesh dependent) energy and $L^2$ norms. 
\end{abstract}

\begin{keywords} high order discontinuous Galerkin; surface partial differential equations; error analysis.
\end{keywords}

\begin{AMS} 65N30, 58J05, 65N15
\end{AMS}

\pagestyle{myheadings}
\thispagestyle{plain}
\markboth{HIGH ORDER DG METHODS ON SURFACES}{ANTONIETTI, DEDNER, MADHAVAN, STANGALINO, STINNER AND VERANI}

\section{Introduction}
Partial differential equations (PDEs) on manifolds have become an active area of research in recent years due to the fact that, in many applications, mathematical models have to be formulated not on a flat Euclidean domain but on a curved surface. For example, they arise naturally in fluid dynamics (e.g.,~surface active agents on the interface between two fluids, \cite{JamLow04}) and material science (e.g.,~diffusion of species along grain boundaries, \cite{DecEllSty01}) but have also emerged in other areas as image processing and cell biology (e.g.,~cell motility involving processes on the cell membrane, \cite{neilson2011modelling} or phase separation on biomembranes, \cite{EllSti10}). 

Finite element methods (FEMs) for elliptic problems and their error analysis have been successfully applied to problems on surfaces via the intrinsic approach in \cite{dziuk1988finite}. This approach has subsequently been extended to parabolic problems \cite{dziuk2007surface} as well as evolving surfaces \cite{dziuk2007finite}. The literature on the application of FEM to various surface PDEs is now quite extensive, a review of which can be found in \cite{dziuk2013finite}. High order error estimates, which require high order surface approximations, have been derived in \cite{demlow2009higher} for the Laplace-Beltrami operator. However, there are a number of situations where conforming FEMs may not be the appropriate numerical method, for instance, problems which lead to steep gradients or even discontinuities in the solution. Such issues can arise for problems posed on surfaces, as in \cite{sokolov2012numerical} where the authors analyse a model for bacteria/cell aggregation. Without an appropriate stabilisation mechanism artificially added to the surface FEMs scheme, the solution can exhibit a spurious oscillatory behaviour which, in the context of the above problem, leads to negative densities of on-surface living cells. 

Given the ease with which one can perform hp-adaptivity using high order discontinuous Galerkin (DG) methods and its in-built stabilisation mechanisms for dealing with advection dominated problems and solution blow-ups, it is natural to extend the DG framework for PDEs posed on surfaces. DG methods have first been extended to surfaces in \cite{dedner2013surfaces}, where an interior penalty (IP) method for a linear second-order elliptic problem was introduced and optimal a priori error estimates in the $L^{2}$ and energy norms for piecewise linear ansatz functions and surface approximations were derived. A posteriori error estimates have then been derived for this surface IP method in \cite{dedner2013aposteriori}.  A continuous/discontinuous Galerkin method for a fourth order elliptic PDE on surfaces is considered in \cite{larsson2013continuous}; \cite{ju2009finite}, \cite{lenz2011convergent} and \cite{GieMue_prep} have also derived a priori error bounds for finite volume methods on (evolving) surfaces via the intrinsic approach.


In this paper, we consider a second-order elliptic equation on a compact smooth connected and oriented surface $\Gamma \subset \mathbb{R}^{3}$ and, following the unified framework of \cite{arnold2002unified} based on the so called flux formulation and the high order surface approximation approach considered in \cite{demlow2009higher}, derive the high order DG formulation on a piecewise polynomial approximation $\Gamma_{h}^{k}$ of $\Gamma$, where $k \geq 1$ is the polynomial order of the approximation. The derivation requires a suitable integration by parts formula which holds on discrete surfaces; this differs from the conventional one used in the planar case. Then, by choosing the numerical fluxes appropriately, we derive ``surface'' counter-parts of the various planar DG bilinear forms discussed in \cite{arnold2002unified}.  

We then perform a unified a priori error analysis of the surface $DG$ methods and derive estimates in the $L^2$ and energy norms by relating $\Gamma_{h}^{k}$ to $\Gamma$ via the surface lifting operator introduced in \cite{dziuk1988finite}. The estimates are a generalisation of the a priori error estimates derived in \cite{dedner2013surfaces} for the surface interior penalty (IP) method, which restricted the analysis to the linear case. The geometric error terms arising when approximating the surface involve those present for the surface FEM method given in \cite{demlow2009higher} as well as additional terms arising from the DG methods. The latter are shown to scale with the same order as the former and hence we obtain optimal convergence rates as long as the surface approximation order and the $DG$ space order coincide.
     

The paper is organised in the following way. Section 2 presents the model problem which we investigate, following the approach taken in \cite{dziuk1988finite}. In Section 3 we present a unified framework for high order DG methods on surfaces and derive the bilinear forms corresponding to each of the classical DG methods outlined in \cite{arnold2002unified}. In Section 4 we describe the technical estimates needed to prove the convergence of the surface DG methods, which is then reported in Section 5. 

\section{Model problem}
\label{sec:NotationAndSetting}
The notation in this section closely follows that used in \cite{dziuk1988finite}.
Let $\Gamma$ be a compact smooth connected and oriented surface in $\mathbb{R}^{3}$, with $\partial \Gamma = \emptyset$, for simplicity, and let $d(\cdot)$ denote the signed distance function to $\Gamma$ which we assume to be well-defined in a sufficiently thin open tube $U$ around $\Gamma$. The orientation of $\Gamma$ is set by taking the normal $\nu$ of $\Gamma$ to be in the direction of increasing $d(\cdot)$, i.e.,
\[\nu(\xi)  = \nabla d(\xi),\ \xi \in \Gamma. \]
We denote by $\pi(\cdot)$ the projection onto $\Gamma$, i.e.,~$\pi:U \rightarrow \Gamma$ is given by
\begin{equation}\label{eq:uniquePoint}
\pi(x)  = x - d(x)\nu(x) \quad \mbox{where } \nu(x) =\nu(\pi(x)).
\end{equation}
In the following, we assume that there is a one-to-one relation between points $x \in U$ and points $\xi=\pi(x) \in \Gamma$. In particular, (\ref{eq:uniquePoint}) is invertible in $U$. 
We denote by
\[ P(\xi) = I - \nu(\xi)\otimes \nu(\xi),\ \xi\in \Gamma, \]
the projection onto the tangent space $T_{\xi}\Gamma$ on $\Gamma$ at a point $\xi \in \Gamma$, where $\otimes$ denotes the usual tensor product.

\begin{remark}
It is easy to see that
\begin{equation}
\label{Da}
\nabla \pi=P-dH,
\end{equation}
where $H = \nabla^{2}d$ \cite[Lemma 3]{dziuk1988finite}.
\end{remark}

For any function $\eta$ defined in an open subset of $U$ containing $\Gamma$ we define its \emph{tangential gradient} on $\Gamma$ by
\[ \nabla_{\Gamma}\eta  = \nabla \eta - \left(\nabla \eta\cdot \nu \right) \nu = P\nabla \eta,\]
and the \emph{Laplace-Beltrami} operator by 
\[ \Delta_{\Gamma} \eta  = \nabla_{\Gamma}\cdot (\nabla_{\Gamma} \eta).
\]

For an integer $m\geq 0$, we define the surface Sobolev space 
$ H^{m}(\Gamma)  = \{u \in L^{2}(\Gamma) : D^{\alpha}u \in L^{2}(\Gamma)\ \forall |\alpha| \leq m \}$. For $s=0$ we write $L^2(\Gamma)$ instead of $H^0(\Gamma)$.
We endow the Sobolev space with the standard seminorm and norm
\[ 
|u|_{H^{m}(\Gamma)}  = \left(\sum_{|\alpha|=m} \norm{D^{\alpha}u}_{L^{2}(\Gamma)}^{2}\right)^{1/2}, \quad 
\norm{u}_{H^{m}(\Gamma)}  = \left(\sum_{k=0}^m |u|_{H^{k}(\Gamma)}^{2}\right)^{1/2},
\]
respectively, cf \cite{wlokapartial}. 
Throughout the paper, we write $x \lesssim y$ to signify $x < C  y$, where $C$ is a generic positive constant whose value, possibly different at any occurrence, does not depend on the meshsize. 
Moreover, we use $x\sim y$ to state the equivalence between $x$ and $y$, i.e., $C_1 y \leq x \leq C_2 y$, for $C_1,\ C_2$ independent of the meshsize.

%

Let $f \in L^{2}(\Gamma)$ be a given function, we consider the following
model problem: Find $u \in H^{1}(\Gamma)$ such that
\begin{align}\label{eq:weakH1}
&& \int_\Gamma \nabla_{\Gamma}u\cdot \nabla_{\Gamma}v + u v\dA=\int_\Gamma fv\dA
&&\forall v\in H^1(\Gamma). 
\end{align}
Throughout the paper, we assume that $u \in H^s(\Gamma)$, $s\geq 2$. Existence and uniqueness of such a solution is shown in \cite{aubin1982nonlinear}.

\section{High order DG approximation} \label{sec:ApproximationAndProperties}
We now follow the high order surface approximation framework introduced in \cite{demlow2009higher}. We begin by approximating the smooth surface $\Gamma$ by a polyhedral surface $\Gamma_{h} \subset U$ composed of planar triangles $\{\WT{K}_{h}\}$ whose vertices lie on $\Gamma$, and denote by $\WT{\calT}_{h}$ the associated regular, conforming triangulation of $\Gamma_{h}$, i.e., $\Gamma_{h} = \bigcup_{\WT{K}_{h} \in \WT{\calT}_{h}} \WT{K}_{h}$.

We next describe a family $\Gamma^k_h$ of polynomial approximations to $\Gamma$ of degree $k \geq 1$ (with the convention that 
$\Gamma_h^1 =\Gamma_h$). For a given element $\WT{K}_{h} \in\WT{\calT}_{h}$, let $\{\phi^k_i\}_{1\leq i\leq n_k}$
be the Lagrange basis functions of degree $k$ defined on $\WT{K}_{h}$ corresponding to a set nodal points $x_1 , ..., x_{n_k}$.
For $x\in\WT{K}_{h}$, we define the discrete projection $\pi_k:\Gamma_h\rightarrow U$ as
$$\pi_k(x) =\sum_{j=1}^{n_k}\pi(x_j )\phi_j^k(x).$$
By constructing $\pi_k$ elementwise  we obtain a continuous piecewise
polynomial map on $\Gamma_h$. We then define the corresponding discrete surface
$\Gamma^k_h  = \{\pi_k(x) : x \in\Gamma_h \}$ and the corresponding regular, conforming triangulation $\WH{\calT}_h = \{\pi_k(\WT{K}_{h})\}_{\WT{K}_{h} \in \WT{\calT}_{h}}$. We denote by $\WH{\calE}_h$ the set of all (codimension one) intersections $\WH{e}_h$ of elements in $\WH{\calT}_h$, i.e., $\WH{e}_{h}=\WH{K}_{h}^+\cap\WH{K}_{h}^-$, 
for some elements $\WH{K}_{h}^{\pm}\in \WH{\calT}_h$. For any $\WH{e}_{h}\in \WH{\calE}_h$, the conormal $n_h^+$ to a point $ x \in \WH{e}_{h}$ is the unique unit vector that belongs to $T_x\WH{K}_h^{+}$ and that satisfies
$$n_h^+(x)\cdot (x-y)\geq 0\ \ \  \forall y\in \WH{K}_{h}^+\cap B_\epsilon(x),$$ where $B_\epsilon(x)$ is the ball centered in $x$ with (small enough) radius $\epsilon>0$.
Analogously, one can define the conormal $n_h^-$ on $\WH{e}_{h}$ by exchanging $\WH{K}_{h}^+$ with $\WH{K}_{h}^-$. Notice that with the above definition $n_h^+\neq -n_h^-$, in general (see Figure \ref{img:Conormals}). Finally, we denote by $\nu_{h}$ the outward unit normal to $\Gamma_{h}^{k}$ and define for each $\WH{K}_{h} \in \WH{\calT}_h$ the discrete projection $P_{h}$ onto the tangential space of $\Gamma_{h}^{k}$ by 
\[ P_{h}(x) = I - \nu_{h}(x) \otimes \nu_{h}(x),\ x \in \WH{K}_{h}, \]
so that, for $v_{h}$ defined on $\Gamma_{h}^{k}$,
\[\nabla_{\Gamma_{h}^{k}}v_{h} = P_{h}\nabla v_{h}.\]  

\begin{figure}[!hbt]
\begin{center}
\includegraphics[scale=1.4]{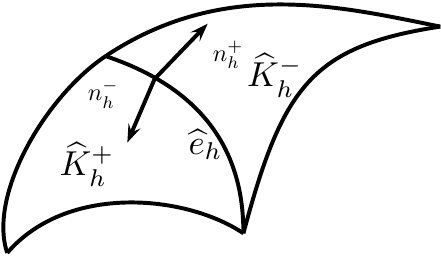}
\caption{Example of two elements in $\WH{\calT}_h$ and their respective conormals on the common edge $\WH{e}_{h}$.}\label{img:Conormals}
\end{center}
\end{figure}
Let $K \subset \mathbb{R}^{2}$ be the (flat) reference element and let $F_{\WH{K}_{h}} : K \rightarrow \WH{K}_{h} \subset  \mathbb{R}^{3}$ for $\WH{K}_{h} \in \WH{\calT}_h$. We define the DG space associated to $\Gamma_h^k$ by 
\begin{align*}
\WH{S}_{hk} =\{\WH{\chi}\in L^2(\Gamma_h^k):\WH{\chi}|_{\WH{K}_{h}}= \chi \circ F_{\WH{K}_{h}}^{-1} \text{ for some } \chi \in \mathbb{P}^{k}(K)\ \ \ \forall \WH{K}_{h}\in\WH{\calT}_{h}\}.
\end{align*}
For $v_h\in\WH{S}_{hk}$ we adopt the convention that $v_h^{\pm}$ is the trace of $v_h$ on $\WH{e}_{h}=\WH{K}_{h}^+\cap\WH{K}_{h}^-$ taken within the interior of $\WH{K}_{h}^{\pm}$, respectively. In addition, we define the vector-valued function space
\[\WH{\Sigma}_{hk} = \{\WH{\tau}\in [L^2(\Gamma_h^k)]^{3}:\WH{\tau}|_{\WH{K}_{h}}= \nabla F_{\WH{K}_{h}}^{-T}\left( \tau \circ F_{\WH{K}_{h}}^{-1} \right) \text{ for some } \tau \in [\mathbb{P}^{k}(K)]^{2}\ \ \ \forall \WH{K}_h\in\WH{\calT}_h \}. \]
Here, $\nabla F_{\WH{K}_{h}}^{-1}$ refers to the (left) \emph{pseudo-inverse} of $\nabla F_{\WH{K}_{h}}$, i.e., $$\nabla F_{\WH{K}_{h}}^{-1} = \left(\nabla F_{\WH{K}_{h}}^{T}\nabla F_{\WH{K}_{h}}\right)^{-1}\nabla F_{\WH{K}_{h}}^{T}.$$
Note that $P_{h} \nabla F_{\WH{K}_{h}}^{-T} = \nabla F_{\WH{K}_{h}}^{-T}$, i.e., $\WH{\tau} \in \WH{\Sigma}_{hk} \Rightarrow \WH{\tau} \in T_{x}\Gamma_{h}^{k}$ almost everywhere. This result straightforwardly implies that $\eta \in \WH{S}_{hk} \Rightarrow \nabla_{\Gamma_{h}^{k}} \eta \in \WH{\Sigma}_{hk}$.


\subsection{Primal formulation}
Rewriting (\ref{eq:weakH1}) as a first order system of equations and following the lines of \cite{arnold2002unified}, we wish to find $(u_{h}, \sigma_{h}) \in \WH{S}_{hk} \times \WH{\Sigma}_{hk}$ such that

\begin{align*}
&\int_{\WH{K}_{h}} \sigma_{h}\cdot \tau_{h} \dAhk = -\int_{\WH{K}_{h}} u_{h} \nabla_{\Gamma_{h}^{k}} \cdot \tau_{h}\dAhk + \int_{\partial \WH{K}_{h}}\WH{u}\ \tau_{h} \cdot n_{\WH{K}_{h}} \dshk,\\
&\int_{\WH{K}_{h}} \sigma_{h}\cdot \nabla_{\Gamma_{h}^{k}} v_{h} + u_{h} v_{h} \dAhk = \int_{\WH{K}_{h}}f_{h} v_{h}\dAhk + \int_{\partial \WH{K}_{h}}\WH{\sigma} \cdot n_{\WH{K}_{h}}\ v_{h}\dshk,
\end{align*}
for all $\tau_{h} \in \WH{\Sigma}_{hk}$, $v_{h} \in \WH{S}_{hk}$ and where the discrete right-hand side $f_h\in L^2(\Gamma_h^k)$ will be related to $f$ in Section \ref{sec:SurfaceLifting}. Here $\WH{u} = \WH{u}(u_{h})$ and $\WH{\sigma} = \WH{\sigma}(u_{h},\sigma_{h}(u_{h}))$ are the so called numerical fluxes which determine the inter-element behaviour of the solution and will be prescribed later on. 

In order to deal with these terms, we need to introduce the following trace operators:
\begin{align*}
q\in L^2(\Gamma): \{q\}=\frac{1}{2}(q^++q^-)&, \ [q]=q^+-q^- \ \text{on } \WH{e}_{h}\in\WH{\calE}_{h}^{k},\\
\phi\in [L^2(\Gamma)]^3: \{\phi;n_{h}\}=\frac{1}{2}(\phi^+\cdot n_{h}^+-\phi^-\cdot n_{h}^-)&, \ [\phi;n_{h}]=\phi^+\cdot n_{h}^++\phi^-\cdot n_{h}^- \ \text{on } \WH{e}_{h}\in\WH{\calE}_{h}^{k}.
\end{align*}
We now state and prove a useful formula which holds for functions in 
\[ H^{1}(\WH{\mathcal{T}}_{h})=\{ v|_{\WH{K}_{h}} \in H^{1}(\WH{K}_{h})\ :\ \forall \WH{K}_{h} \in \WH{\mathcal{T}}_{h} \}. \]
\begin{lemma}\label{eq:intByParts}
Let $\phi \in [H^{1}(\WH{\mathcal{T}}_{h})]^{3}$ and $\psi \in H^{1}(\WH{\mathcal{T}}_{h})$. Then we have that
\begin{align*}
\sum_{\WH{K}_{h} \in \WH{\calT}_{h}}\int_{\partial \WH{K}_{h}}\psi \phi \cdot n_{\WH{K}_{h}} \dshk=\sum_{\WH{e}_{h} \in \WH{\calE}_{h}}\int_{\WH{e}_{h}}[\phi;n_{h}]\{\psi\} + \{\phi;n_{h}\}[\psi] \dshk.
\end{align*}
\end{lemma}
\begin{proof}
The result follows straightforwardly by noting that
\begin{align*}
\sum_{\WH{K}_{h} \in \WH{\calT}_{h}}\int_{\partial \WH{K}_{h}}\psi \phi \cdot n_{\WH{K}_{h}} \dshk &=  \sum_{\WH{e}_{h} \in \WH{\calE}_{h}}\int_{\WH{e}_{h}}[\psi \phi; n_{h}] \dshk\\
& = \sum_{\WH{e}_{h} \in \WH{\calE}_{h}}\int_{\WH{e}_{h}}[\phi;n_{h}]\{\psi\} + \{\phi;n_{h}\}[\psi] \dshk.
\end{align*}
\end{proof}

\begin{remark}
The formula in Lemma \ref{eq:intByParts} is a generalisation to surfaces of the classical (planar) formula given in (2.1) of \cite{arnold82}. 
\end{remark}

Applying the above lemma, summing over all elements and proceeding in a similar fashion to \cite{arnold2002unified}, we obtain
\begin{align}
\sum_{\WH{K}_{h} \in \WH{\calT}_{h}}\int_{\WH{K}_{h}} \sigma_{h} \cdot \tau_{h}\ \dAhk =& \sum_{\WH{K}_{h} \in \WH{\calT}_{h}}\int_{\WH{K}_{h}}\nabla_{\Gamma_{h}^{k}}u_{h}\cdot \tau_{h}\ \dAhk \notag \\
& + \sum_{\WH{e}_{h} \in \WH{\calE}_{h}}\int_{\WH{e}_{h}}[\WH{u}-u_{h}] \{\tau_{h}; n_{h}\} + \{\WH{u}-u_{h}\}[\tau_{h}; n_{h}]\ \dshk, \label{eq:primalDerivation1}
\end{align}
\begin{align}
\sum_{\WH{K}_{h} \in \WH{\calT}_{h}}\int_{\WH{K}_{h}} \sigma_{h} \cdot \nabla_{\Gamma_{h}^{k}}v_{h} + u_{h} v_{h}\ \dAhk = &\sum_{\WH{K}_{h} \in \WH{\calT}_{h}}\int_{\WH{K}_{h}}f_{h}v_{h}\ \dAhk \notag \\
&+ \sum_{\WH{e}_{h} \in \WH{\calE}_{h}}\int_{\WH{e}_{h}}\Big(\{ \WH{\sigma};n_{h}\}[v_{h}] + [\WH{\sigma};n_{h}]\{ v_{h} \}\Big)\ \dshk, \label{eq:primalDerivation2}
\end{align}
for every $\tau_{h} \in \WH{\Sigma}_{hk}$ and $v_{h} \in \WH{S}_{hk}$.\\

We now introduce the DG lifting operators $r_{\WH{e}_{h}} : L^{2}(\WH{\mathcal{E}}_{h}) \rightarrow \WH{\Sigma}_{hk}$ and ${l_{\WH{e}_{h}}:L^{2}(\WH{\mathcal{E}}_{h}) \rightarrow \WH{\Sigma}_{hk}}$ which satisfy
$$\int_{\Gamma_{h}^{k}}r_{\WH{e}_{h}}(\phi)\cdot \tau_h \dAhk = - \int_{\WH{e}_{h}} \phi\{\tau_h;n_{h}\}\dshk \ \ \ \ \forall \tau_h\in\WH{\Sigma}_{hk},$$
$$\int_{\Gamma_{h}^{k}}l_{\WH{e}_{h}}(q)\cdot \tau_h \dAhk = -\int_{\WH{e}_{h}} q[\tau_h;n_{h}]\dshk\ \ \ \ \forall \tau_h\in\WH{\Sigma}_{hk},$$
and $r_{h} : L^{2}(\WH{\calE}_{h}) \rightarrow \WH{\Sigma}_{hk}$ and $l_{h}:L^{2}(\WH{\calE}_{h}) \rightarrow \WH{\Sigma}_{hk}$, given by
$$r_h(\phi) = \sum_{\WH{e}_{h} \in \WH{\calE}_{h}} r_{\WH{e}_{h}}(\phi), \qquad l_h(\phi) = \sum_{\WH{e}_{h} \in \WH{\calE}_{h}} l_{\WH{e}_{h}}(\phi).$$

Using these operators, we can write $\sigma_{h}$ solely in terms of $u_{h}$. Indeed, on each element $\WH{K}_{h} \in \WH{\calT}_{h}$ we obtain from (\ref{eq:primalDerivation1}) that
\begin{align} \label{eq:sigmah}
\sigma_{h} = \sigma_{h}(u_{h})= \nabla_{\Gamma_{h}^{k}}u_{h} - r_{h}([\WH{u}(u_{h}) - u_{h}]) - l_{h}(\{\WH{u}(u_{h}) - u_{h}\}).
\end{align}
Note that (\ref{eq:sigmah}) does in fact imply that $\sigma_{h} \in \WH{\Sigma}_{hk}$ as $ \nabla_{\Gamma_{h}^{k}}u_{h} \in \WH{\Sigma}_{hk}$ and $r_{h}, l_{h} \in \WH{\Sigma}_{hk}$ by construction. Taking $\tau_{h} = \nabla_{\Gamma_{h}^{k}}v_{h}$ in (\ref{eq:primalDerivation1}), substituting the resulting expression into (\ref{eq:primalDerivation2}) and using (\ref{eq:sigmah}), we obtain the primal formulation: find $(u_{h}, \sigma_{h}) \in \WH{S}_{hk} \times \WH{\Sigma}_{hk}$ such that
\begin{align}\label{eq:PrimalFormulation}
\ADG(u_{h},v_{h}) = \sum_{\WH{K}_{h} \in \WH{\calT}_{h}}\int_{\WH{K}_{h}} f_{h}v_{h}\ \dAhk\ \  \forall v_{h} \in \WH{S}_{hk}, 
\end{align}  
where
\begin{align}\label{eq:PrimalForm}
\ADG(u_{h},v_{h})&= \sum_{\WH{K}_{h} \in \WH{\calT}_{h}}\int_{\WH{K}_{h}}\nabla_{\Gamma_{h}^{k}}u_{h}\cdot \nabla_{\Gamma_{h}^{k}}v_{h} + u_{h}v_{h}\ \dAhk \notag \\
&+ \sum_{\WH{e}_{h} \in \WH{\calE}_{h}}\int_{\WH{e}_{h}}([\WH{u}-u_{h}] \{\nabla_{\Gamma_{h}^{k}}v_{h}; n_{h}\} - \{\WH{\sigma}; n_{h}\}[v_{h}])\ \dshk \notag \\ 
&+ \sum_{\WH{e}_{h} \in \WH{\calE}_{h}}\int_{\WH{e}_{h}}(\{\WH{u}-u_{h}\}[\nabla_{\Gamma_{h}^{k}}v_{h}; n_{h}] - [\WH{\sigma}; n_{h}]\{v_{h}\})\ \dshk.
\end{align}

\subsection{Examples of surface DG methods}\label{DGfluxes}
For the following methods we introduce the penalization coefficients $\eta_{\WH{e}_{h}}$ and $\beta_{\WH{e}_{h}}$ defined as
\begin{equation}\label{penalityParameters}
\eta_{\WH{e}_{h}}=\alpha, \quad \beta_{\WH{e}_{h}}=\alpha k^{2} h_{\WH{e}_{h}}^{-1},
\end{equation}
where $\alpha>0$ is a parameter at our disposal. 

\subsubsection{Surface Bassi-Rebay method}
To derive the surface Bassi-Rebay method, based on \cite{bassi1997high}, we choose
\begin{alignat*}{3}
\WH{u}^{+}&=\{u_h\},\quad &\WH{u}^{-}&=\{u_h\},\\
\WH{\sigma}^+&= \{\sigma_h;n_{h}\}n_{h}^+,\quad &\WH{\sigma}^-&=-\{\sigma_h;n_{h}\}n_{h}^-.
\end{alignat*}

From (\ref{eq:sigmah}) we obtain $\sigma_h=\nabla_{\Gamma_{h}^{k}}u_h+r_h([u_h])$ and

\begin{align*}
\sum_{\WH{e}_{h}\in\WH{\calE}_{h}} \int_{\WH{e}_{h}} &\{\WH{\sigma};n_{h}\}[v_h]\dshk \\
&=\sum_{\WH{e}_{h}\in\WH{\calE}_{h}} \int_{\WH{e}_{h}} \{\sigma_h;n_{h}\}[v_h]\dshk\\
&=\sum_{\WH{e}_{h}\in\WH{\calE}_{h}} \int_{\WH{e}_{h}} \{\nabla_{\Gamma_{h}^{k}}u_h;n_{h}\}[v_h]\dshk + \sum_{\WH{e}_{h}\in\WH{\calE}_{h}} \int_{\WH{e}_{h}} \{r_h([u_h]);n_{h}\}[v_h]\dshk\\
&=\sum_{\WH{e}_{h}\in\WH{\calE}_{h}} \int_{\WH{e}_{h}} \{\nabla_{\Gamma_{h}^{k}}u_h;n_{h}\}[v_h]\dshk - \sum_{\WH{K}_{h}\in\WH{\calT}_{h}} \int_{\WH{K}_{h}} r_h([u_h])\cdot r_{h}([v_h])\dAhk.
\end{align*}

Therefore
\begin{align}\label{eq:BassiRebayGammahForm}
\ADG(u_h,v_h)=&\sum_{\WH{K}_{h}\in\WH{\calT}_{h}} \int_{\WH{K}_{h}} \bigg(\nabla_{\Gamma_{h}^{k}}u_h\cdot \nabla_{\Gamma_{h}^{k}}v_h + u_{h} v_{h} +r_h([u_h])\cdot r_h([v_h])\bigg)\dAhk \notag \\
& -\sum_{\WH{e}_{h}\in\WH{\calE}_{h}} \int_{\WH{e}_{h}} \bigg(\{\nabla_{\Gamma_{h}^{k}}u_h;n_{h}\}[v_h] + \{\nabla_{\Gamma_{h}^{k}}v_h;n_{h}\}[u_h]\bigg) \dshk.
\end{align}

\subsubsection{Surface Brezzi et al. method}
For the surface Brezzi et al. method, based on \cite{brezzi1999discontinuous}, we choose
\begin{alignat*}{3}
\WH{u}^{+}&=\{u_h\},\quad &\WH{u}^{-}&=\{u_h\},\\
\WH{\sigma}^+&=\{\sigma_h+\eta_{\WH{e}_{h}}r_{\WH{e}_{h}}([u_h]);n_{h}\}n_{h}^+,\quad &\WH{\sigma}^-&=-\{\sigma_h+\eta_{\WH{e}_{h}}r_{\WH{e}_{h}}([u_h]);n_{h}\}n_{h}^-,
\end{alignat*}

The method is similar to the Bassi-Rebay one with an additional term. Indeed,

\begin{align*}
\sum_{\WH{e}_{h}\in\WH{\calE}_{h}} &\int_{\WH{e}_{h}} \{\WH{\sigma};n_{h}\}[v_h]\dshk\\
&=\sum_{\WH{e}_{h}\in\WH{\calE}_{h}} \int_{\WH{e}_{h}} \{\sigma_h+\eta_{\WH{e}_{h}}r_{\WH{e}_{h}}([u_h]);n_{h}\}[v_h]\dshk\\
&=\sum_{\WH{e}_{h}\in\WH{\calE}_{h}} \int_{\WH{e}_{h}} \{\nabla_{\Gamma_{h}^{k}}u_h;n_{h}\}[v_h]\dshk + \sum_{\WH{e}_{h}\in\WH{\calE}_{h}} \int_{\WH{e}_{h}} \{r_h([u_h])+\eta_{\WH{e}_{h}}r_{\WH{e}_{h}}([u_h]);n_{h}\}[v_h]\dshk\\
&=\sum_{\WH{e}_{h}\in\WH{\calE}_{h}} \int_{\WH{e}_{h}} \{\nabla_{\Gamma_{h}^{k}}u_h;n_{h}\}[v_h]\dshk - \sum_{\WH{K}_{h}\in\WH{\calT}_{h}} \int_{\WH{K}_{h}} r_h([u_h])\cdot r_h([v_h])\dAhk \\
& \qquad - \sum_{\WH{K}_{h}\in\WH{\calT}_{h}} \int_{\WH{K}_{h}} \eta_{\WH{e}_{h}}r_{\WH{e}_{h}}([u_h])\cdot r_{\WH{e}_{h}}([v_h])\dAhk.
\end{align*}
Then
\begin{align}\label{eq:BrezziEtAlGammahForm}
\ADG(u_h,v_h)&=+\sum_{\WH{K}_{h}\in\WH{\calT}_{h}} \int_{\WH{K}_{h}} \nabla_{\Gamma_{h}^{k}}u_h\cdot \nabla_{\Gamma_{h}^{k}}v_h + u_{h} v_{h}\dAhk\notag \\
&-\sum_{\WH{e}_{h}\in\WH{\calE}_{h}} \int_{\WH{e}_{h}} \{\nabla_{\Gamma_{h}^{k}}u_h;n_{h}\}[v_h] + \{\nabla_{\Gamma_{h}^{k}}v_h;n_{h}\}[u_h]\dshk\notag \\
&+\sum_{\WH{K}_{h}\in\WH{\calT}_{h}} \int_{\WH{K}_{h}} r_h([u_h])\cdot r_h([v_h])+\eta_{\WH{e}_{h}} r_{\WH{e}_{h}}([u_h])\cdot r_{\WH{e}_{h}}([v_h])\dAhk.
\end{align}

\subsubsection{Surface IP method}\label{SIPGMethod}
To derive the surface IP method, based on \cite{douglas1976interior,arnold82}, we choose the numerical fluxes $\WH{u}$ and $\WH{\sigma}$ as follows:
\begin{alignat*}{3}
\WH{u}^{+}&=\{u_h\},\quad &\WH{u}^{-}&=\{u_h\},\\
\WH{\sigma}^{+} &=\bigg( \{ \nabla_{\Gamma_{h}^{k}}u_{h}; n_{h}\} - \beta_{\WH{e}_{h}}[u_{h}]\bigg) n_{h}^{+},\quad &\WH{\sigma}^{-} &= -\bigg( \{ \nabla_{\Gamma_{h}^{k}}u_{h}; n_{h}\} - \beta_{\WH{e}_{h}}[u_{h}]\bigg) n_{h}^{-}.
\end{alignat*}
Substituting them into (\ref{eq:PrimalForm}), we obtain

\begin{align}\label{eq:InteriorPenaltyGammahForm}
\ADG(u_{h},v_{h}) &= \sum_{\WH{K}_{h} \in \WH{\calT}_{h}}\int_{\WH{K}_{h}}\nabla_{\Gamma_{h}^{k}}u_{h}\cdot \nabla_{\Gamma_{h}^{k}}v_{h} + u_{h} v_{h}\ \dAhk + \sum_{\WH{e}_{h} \in \WH{\calE}_{h}}\int_{\WH{e}_{h}} \beta_{\WH{e}_{h}}[u_{h}][v_{h}]\ \dshk \notag \\
& - \sum_{\WH{e}_{h} \in \WH{\calE}_{h}}\int_{\WH{e}_{h}} \big( [u_{h}] \{\nabla_{\Gamma_{h}^{k}}v_{h}; n_{h} \} + [v_{h}]\{\nabla_{\Gamma_{h}^{k}}u_{h}; n_{h}\} \big) \dshk
\end{align}
which is exactly the surface IP method considered in \cite{dedner2013surfaces}.

\subsubsection{Surface NIPG method}
For the surface NIPG method, based on \cite{riviere1999improved} (or equivalently the Baumann-Oden method in \cite{baumann1998discontinuous} with $\beta_{\WH{e}_{h}}=0$), we choose
\begin{alignat*}{3}
\WH{u}^{+}&=\{u_h\}+[u_h],\quad &\WH{u}^{-}&=\{u_h\}-[u_h],\\
\WH{\sigma}^+&=\bigg( \{\nabla_{\Gamma_{h}^{k}}u_h;n_{h}\} - \beta_{\WH{e}_{h}}[u_h]\bigg) n_{h}^+,\quad &\WH{\sigma}^-&=-\bigg( \{\nabla_{\Gamma_{h}^{k}}u_h;n_{h}\} - \beta_{\WH{e}_{h}}[u_h]\bigg) n_{h}^-.
\end{alignat*}
We see that $\{\WH{u}-u_h\}=0$, $[\WH{u}-u_h]=[u_h]$ and $[\WH{\sigma};n_{h}]=0$. We may derive the surface NIPG bilinear form in a similar way as for the surface IP method.

\subsubsection{Surface IIPG method}
For the surface IIPG method, based on \cite{Dawson20042565}, we choose the numerical fluxes $\WH{u}$ and $\WH{\sigma}$ as follows:
\begin{alignat*}{3}
\WH{u}^{+}&=u_h^{+},\quad &\WH{u}^{-}&=u_h^{-},\\
\WH{\sigma}^+&=\bigg( \{\nabla_{\Gamma_{h}^{k}}u_h;n_{h}\} - \beta_{\WH{e}_{h}}[u_h]\bigg) n_{h}^+,\quad &\WH{\sigma}^-&=-\bigg( \{\nabla_{\Gamma_{h}^{k}}u_h;n_{h}\} - \beta_{\WH{e}_{h}}[u_h]\bigg) n_{h}^-.
\end{alignat*}
We see that $\{\WH{u}-u_h\}=0$, $[\WH{u}-u_h]=0$ and $[\WH{\sigma};n_{h}]=0$. Here again, we may derive the surface IIPG bilinear form in like manner as for the surface IP method.

\subsubsection{Surface Bassi et al. method}
For the surface Bassi et al. method, based on \cite{bassi1997high2}, we choose
\begin{alignat*}{3}
\WH{u}^{+}&=\{u_h\},\quad &\WH{u}^{-}&=\{u_h\},\\
\WH{\sigma}^+&=\bigg( \{\nabla_{\Gamma_{h}^{k}}u_h+\eta_{\WH{e}_{h}}r_{\WH{e}_{h}}([u_h]);n_{h}\}\bigg) n_{h}^+,\quad &\WH{\sigma}^-&=-\bigg( \{\nabla_{\Gamma_{h}^{k}}u_h+\eta_{\WH{e}_{h}}r_{\WH{e}_{h}}([u_h]);n_{h}\}\bigg) n_{h}^-.
\end{alignat*}
The resulting bilinear surface form can be easily obtained using the contributes of the surface IP and surface Brezzi et al. bilinear forms.

\subsubsection{Surface LDG method}
Finally for the surface LDG method, based on \cite{cockburn1998local}, the numerical fluxes are chosen as follows:
\begin{align*}
&\WH{u}^{+}=\{u_h\}-\beta\cdot n_{h}^+[u_h],\quad \WH{u}^{-}=\{u_h\}-\beta\cdot n_{h}^+[u_h],\\
&\WH{\sigma}^+=\bigg(\{\sigma_h;n_{h}\}-\beta_{\WH{e}_{h}}[u_h] +\beta\cdot n_{h}^+[\sigma_h;n_{h}] \bigg)n_{h}^+,\\
&\WH{\sigma}^-=-\bigg(\{\sigma_h;n_{h}\}-\beta_{\WH{e}_{h}}[u_h] +\beta\cdot n_{h}^+[\sigma_h;n_{h}] \bigg)n_{h}^-,
\end{align*}
where $\beta \in [L^{\infty}(\Gamma_{h}^{k})]^{3}$ is a (possibly null) constant on each edge $\WH{e}_{h} \in \WH{\mathcal{E}}_{h}$. We see that $\{\WH{u}-u_h\}=-\beta\cdot n_{h}^+[u_h]$ and $[\WH{u}-u_h]=-[u_h]$. So, from (\ref{eq:sigmah}), we obtain:
\begin{align*}
\WH{\sigma}^+=\bigg(&\{\nabla_{\Gamma_{h}^{k}}u_h;n_{h}\} + \{r_h([u_h]);n_{h}\} + \{\beta\cdot n_{h}^+l_h([u_h]);n_{h}\}-\beta_{\WH{e}_{h}}[u_h] \\
&+\beta\cdot n_{h}^+\Big( [\nabla_{\Gamma_{h}^{k}}u_h;n_{h}] + [r_h([u_h]);n_{h}]+ [\beta\cdot n_{h}^+l_h([u_h]);n_{h}] \Big) \bigg)n_{h}^+,
\end{align*}

and in a similar way $\WH{\sigma}^-$. Then

\begin{align*}
\sum_{\WH{e}_{h}\in\WH{\calE}_{h}^{k}} &\int_{\WH{e}_{h}} \{\WH{\sigma};n_{h}\}[v_h]\dshk\\
&=\sum_{\WH{e}_{h}\in\WH{\calE}_{h}^{k}} \int_{\WH{e}_{h}} \Big(\{\nabla_{\Gamma_{h}^{k}}u_h;n_{h}\}[v_h]+[\nabla_{\Gamma_{h}^{k}}u_h;n_{h}]\beta\cdot n_{h}^+[v_h]  -\beta_{\WH{e}_{h}}[u_h][v_h] \Big)\dshk\\
& -\sum_{\WH{K}_{h}\in\WH{\calT}_{h}} \int_{\WH{K}_{h}} \Big(r_h([u_h]) + \beta \cdot n_{h}^+l_h\big([u_h]\big) \Big) \cdot \Big( r_h([v_h]) + \beta \cdot n_{h}^+l_h\big([v_h]\big) \Big)\dAhk,
\end{align*}

and the surface LDG form can be written as
\begin{align}\label{eq:LDGGammahForm}
\ADG(&u_h,v_h)\notag \\
&=\sum_{\WH{K}_{h}\in\WH{\calT}_{h}} \int_{\WH{K}_{h}} \nabla_{\Gamma_{h}^{k}}u_h\cdot \nabla_{\Gamma_{h}^{k}}v_h + u_{h} v_{h}\dAhk\notag \\
&- \sum_{\WH{e}_{h}\in\WH{\calE}_{h}} \int_{\WH{e}_{h}} [u_h]\{\nabla_{\Gamma_{h}^{k}}v_h;n_{h}\}-\{\nabla_{\Gamma_{h}^{k}}u_h;n_{h}\}[v_h]\dshk \notag \\
&+\sum_{\WH{e}_{h}\in\WH{\calE}_{h}} \int_{\WH{e}_{h}} \bigg(-[\nabla_{\Gamma_{h}^{k}}u_h;n_{h}]\beta\cdot n_{h}^+[v_h] - \beta\cdot n_{h}^+[u_h][\nabla_{\Gamma_{h}^{k}}v_h;n_{h}] +\beta_{\WH{e}_{h}}[u_h][v_h] \bigg)\dshk \notag \\
& +\sum_{\WH{K}_{h}\in\WH{\calT}_{h}} \int_{\WH{K}_{h}} \Big(r_h([u_h]) + \beta\cdot n_{h}^+l_h\big([u_h]\big) \Big) \cdot \Big( r_h([v_h]) + \beta \cdot n_{h}^+l_h\big([v_h]\big) \Big)\dAhk.
\end{align}

\begin{remark}
In the flat case, for which we have $n_{h}^+=-n_{h}^-$, all of the surface DG methods yield the corresponding ones found in \cite{arnold2002unified}.
\end{remark}

\begin{remark}
Notice that for all of our choices of the numerical fluxes $\WH{u}$ and $\WH{\sigma}$, we have that $[\WH{u}] = 0$ and $[\WH{\sigma};n_{h}] = 0$.  In addition, they are consistent with the corresponding fluxes in the flat case given in \cite{arnold2002unified} with the exception of those of the surface LDG method. In the latter case, the equivalence does not hold because all the surface trace operators are scalars and they cannot be combined in the same way as the corresponding LDG fluxes in the flat case. 
\end{remark}

\section{Technical tools}
In this section we introduce the necessary tools and geometric relations needed to work on discrete domains and prove boundedness and stability of the bilinear forms, following the framework introduced in \cite{dziuk1988finite}. 
\subsection{Surface lifting}\label{sec:SurfaceLifting}
For any function $w$ defined on $\Gamma_{h}^{k}$ we define the surface \emph{lift} onto $\Gamma$ by
\[ w^{\ell}(\xi)  = w(x(\xi)),\ \xi \in \Gamma, \]
where, thanks to the invertibility of (\ref{eq:uniquePoint}), $x(\xi)$ is defined as the unique solution of
\[ x(\xi) = \pi(x) + d(x)\nu(\xi).\]
In particular, for every $\WH{K}_{h} \in \WH{\calT}_{h}$ there is a unique curved triangle $\WH{K}_{h}^{\ell}  = \pi(\WH{K}_{h}) \subset \Gamma$. We may then define the regular, conforming triangulation $\WH{\calT}_{h}^{\ell}$ of $\Gamma$ given by
\[ \Gamma = \bigcup_{\WH{K}_{h}^{\ell} \in \WH{\calT}_{h}^{\ell}} \WH{K}_{h}^{\ell}. \]
The triangulation $\WH{\calT}_{h}^{\ell}$ of $\Gamma$ is thus induced by the triangulation $\WH{\calT}_{h}$ of $\Gamma_{h}^{k}$ via the surface lift operator. Similarly, we denote by $\WH{e}_{h}^{\ell} =\pi(\WH{e}_{h}) \in \WH{\calE}_{h}^{\ell}$ the unique curved edge associated to $\WH{e}_{h}$.
The function space for surface lifted functions is chosen to be given by
$$\WH{S}_{hk}^{\ell}=\{\chi\in L^2(\Gamma) : \chi=\WH{\chi}^{\ell} \text{ for some } \WH{\chi}\in\WH{S}_{hk}\}.$$
We define the discrete right-hand side $f_{h}$ such that $f_{h}^{\ell} = f$. We also denote by $w^{-\ell} \in \WH{S}_{hk}$ the \emph{inverse} surface lift of some function $w \in \WH{S}_{hk}^{\ell}$ satisfying $(w^{-\ell})^{\ell} = w$.
\\
\\ 
One can show that for $v_{h}$ defined on $\Gamma_{h}^{k}$, we have that
\begin{equation*}
\nabla_{\Gamma_{h}^{k}} v_{h} = P_{h}(I - d H) P \nabla_{\Gamma} v_{h}^{\ell}.  
\end{equation*}
Furthermore, let $\delta_{h}$ be the local area deformation when transforming $\WH{K}_{h}$ to $\WH{K}_{h}^{\ell}$, i.e.,  
\[\delta_{h}\dAhk = \dA,\] 
and let $\delta_{\WH{e}_{h}}$ be the local edge deformation when transforming $\WH{e}_{h}$ to $\WH{e}_{h}^{\ell}$, i.e., \[\delta_{\WH{e}_{h}}\dshk= \ds.\] 
Finally, let
\begin{equation*}
R_{h} = \frac{1}{\delta_{h}} P (I - d H) P_{h} (I - d H) P. 
\end{equation*}
Then one can show that
\begin{align}\label{eq:liftingOfStiffnessTerm} 
\int_{\Gamma_{h}^{k}} \nabla_{\Gamma_{h}^{k}}u_{h} \cdot \nabla_{\Gamma_{h}^{k}}v_{h} + u_{h} v_{h} \dAh = \int_{\Gamma} R_{h} \nabla_{\Gamma}u_{h}^{\ell} \cdot \nabla_{\Gamma}v_{h}^{\ell} + \delta_{h}^{-1} u_{h}^{\ell} v_{h}^{\ell} \dA.  
\end{align}

\subsection{Geometric estimates}
We next prove some geometric error estimates relating $\Gamma$ to $\Gamma_{h}^{k}$.
\begin{lemma} \label{Gamma2GammahSmall}
Let $\Gamma$ be a compact smooth connected and oriented surface in $\mathbb{R}^{3}$ and let $\Gamma_{h}^{k}$ be its Lagrange interpolant of degree $k$.  Furthermore, we denote by $n^{+/-}$ the unit (surface) conormals to respectively $\WH{e}_{h}^{l+/-}$. Then, for sufficiently small $h$, we have that
\begin{subequations}
\begin{align}
\norm{d}_{L^{\infty}(\Gamma_{h}^{k})} &\lesssim h^{k+1},\label{1rel}\\
\norm{1-\delta_{h}}_{L^{\infty}(\Gamma_{h}^{k})} &\lesssim h^{k+1},\label{2rel}\\
\norm{\nu-\nu_{h}}_{L^{\infty}(\Gamma_{h}^{k})} &\lesssim h^{k},\label{3rel}\\
\norm{P-R_{h}}_{L^{\infty}(\Gamma_{h}^{k})} &\lesssim h^{k+1},\label{4rel}\\
\norm{1-\delta_{\WH{e}_{h}}}_{L^{\infty}(\WH{\mathcal{E}}_{h})} &\lesssim h^{k+1},\label{5rel}\\
\sup_{\WH{K} \in \WH{\mathcal{T}}_{h}}\norm{P-R_{\WH{e}_{h}}}_{L^{\infty}(\partial \WH{K}_{h})} &\lesssim h^{k+1},\label{6rel}\\
\norm{n^{+/-} -P n_{h}^{+/-}}_{L^{\infty}(\WH{\mathcal{E}}_{h})} &\lesssim  h^{k+1},\label{7rel}
\end{align}
\end{subequations}
where $R_{\WH{e}_{h}}  =  \frac{1}{\delta_{\WH{e}_{h}}}P(I-d H)P_{h}(I-d H)$.
\end{lemma}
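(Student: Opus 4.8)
The plan is to reduce every one of the seven bounds to two elementary facts about degree-$k$ Lagrange interpolation and then to exploit a cancellation that recurs throughout. Since $\Gamma_h^k$ is the degree-$k$ Lagrange interpolant of $\Gamma$, working elementwise on the flat reference triangle and pulling back by $F_{\WH{K}_h}$, standard polynomial interpolation theory together with the smoothness of $\Gamma$ (hence of the map $\pi$) yields
\[
\norm{\pi-\pi_k}_{L^\infty} \lesssim h^{k+1}, \qquad \norm{\nabla(\pi-\pi_k)}_{L^\infty}\lesssim h^{k},
\]
the loss of one power in the second estimate being the usual one for the gradient of an interpolation error. Estimate (\ref{1rel}) then follows at once: for $x\in\Gamma_h^k$ the quantity $|d(x)|$ equals the distance of $x$ to $\Gamma$ (because $\norm{\nabla d}=1$), which is controlled by the first bound. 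Estimate (\ref{3rel}) follows from the second bound, since $\nu$ and $\nu_h$ arise (up to normalisation) from first derivatives of the surface parametrisation, and that of $\Gamma_h^k$ is precisely the degree-$k$ interpolant of that of $\Gamma$; differentiating the interpolant costs exactly one power of $h$, which is why (\ref{3rel}) carries $h^{k}$ rather than $h^{k+1}$.

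The two remaining ingredients are the cancellation principles that upgrade the $O(h^{k})$ normal error to $O(h^{k+1})$ wherever it genuinely enters. First, since $|\nu|=|\nu_h|=1$,
\[
1-\nu\cdot\nu_h = \tfrac12\,\norm{\nu-\nu_h}^2 \lesssim h^{2k}\le h^{k+1},
\]
the last inequality using $k\geq 1$. Second, as $P\nu=0$ we have $P\nu_h=P(\nu_h-\nu)$, so $\norm{P\nu_h}\lesssim h^{k}$ and any rank-one expression $(P\nu_h)\otimes(P\nu_h)$ is $O(h^{2k})\le O(h^{k+1})$. With these, (\ref{2rel}) and (\ref{5rel}) are immediate: the area and edge deformation factors admit the standard representation (derivable from (\ref{Da}), cf.\ \cite{dziuk1988finite,demlow2009higher}) as a product of curvature factors of the form $(1-d\kappa_i)$ times $\nu\cdot\nu_h$ (respectively its edge analogue); by (\ref{1rel}) each curvature factor is $1+O(h^{k+1})$, while $\nu\cdot\nu_h=1+O(h^{k+1})$ by the first cancellation, whence $\delta_h,\delta_{\WH{e}_h}=1+O(h^{k+1})$.

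For (\ref{4rel}) I would insert $\delta_h^{-1}=1+O(h^{k+1})$ (from (\ref{2rel})) and expand $R_h=\delta_h^{-1}P(I-dH)P_h(I-dH)P$ in powers of $d$. Every term carrying a factor $d$ is $O(h^{k+1})$ by (\ref{1rel}), and the $d$-free term is
\[
P P_h P = P - (P\nu_h)\otimes(P\nu_h) = P + O(h^{2k}),
\]
so that $R_h=P+O(h^{k+1})$, which is (\ref{4rel}). Estimate (\ref{6rel}) is handled identically; the only point requiring care is that $R_{\WH{e}_h}$ lacks the trailing projection $P$, so a first-order-in-$(\nu_h-\nu)$ term of the form $(\nu_h-\nu)\otimes\nu$ (plus $O(h^{2k})$) survives the expansion. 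This term annihilates tangential vectors, and since $R_{\WH{e}_h}$ is only ever applied to $\nablalb(\cdot)\in T\Gamma$ and paired against the tangential conormal, it drops out and the bound $O(h^{k+1})$ is recovered.

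The genuinely new and most delicate estimate is (\ref{7rel}), which has no counterpart in the conforming surface FEM analysis of \cite{demlow2009higher}; I expect the bookkeeping here to be the main obstacle. I would relate the discrete conormal $n_h^{\pm}$ on $\WH{e}_h$ to the surface conormal $n^{\pm}$ on $\WH{e}_h^{\ell}$ through the lift differential $\nabla\pi=P-dH$ of (\ref{Da}). The vector $P n_h^{+}$ lies in $T\Gamma$, and its length satisfies $\norm{P n_h^+}^2=1-(n_h^+\cdot\nu)^2=1-O(h^{2k})$ because $n_h^+\cdot\nu=n_h^+\cdot(\nu-\nu_h)=O(h^{k})$, so $P n_h^+$ is unit up to $O(h^{k+1})$. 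Moreover, testing against the lifted edge tangent $(P-dH)t_h$, where $t_h$ is the unit edge tangent of $\WH{e}_h$, one computes $Pn_h^+\cdot(P-dH)t_h=O(h^{2k})+O(h^{k+1})=O(h^{k+1})$, using $n_h^+\cdot t_h=0$ together with the facts that $n_h^+\cdot\nu$ and $t_h\cdot\nu$ are $O(h^k)$ for the leading product and (\ref{1rel}) for the $d$-term. Hence $Pn_h^+$ is, up to $O(h^{k+1})$, a unit tangential vector orthogonal to the curved edge, i.e.\ it coincides with $n^+$ to that order, giving (\ref{7rel}). The unifying theme, and the crux of the whole lemma, is precisely this: the normal error is only $O(h^k)$, and one must verify in each quantity that it enters either quadratically or contracted against a direction that suppresses it, so that the sharp rate $h^{k+1}$ is attained.
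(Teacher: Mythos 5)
Your proposal is essentially sound and reaches all seven bounds, but for the two estimates that constitute the real content of the appendix --- (\ref{5rel}) and (\ref{7rel}) --- it travels by a noticeably different route than the paper. The paper works with cross-product identities on the moving frame $(\overline{\tau},n,\nu)$ and a two-stage bootstrap: it first derives the sub-optimal bound $1-Ch^{k}\leq \norm{\nabla\pi\overline{\tau}_h}\leq 1+Ch^{k+1}$, then feeds that back in to upgrade the lower bound to $1-Ch^{k+1}$, the upgrade resting on the identities $1-(\overline{\tau},\overline{\tau}_h)=\tfrac12\norm{\overline{\tau}-\overline{\tau}_h}^2=O(h^{2k})$ and $(\nu,\overline{\tau}_h\times\nabla\pi\overline{\tau}_h)=-(\nu,\overline{\tau}_h\times dH\overline{\tau}_h)=O(h^{k+1})$. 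You instead isolate the two cancellation principles (unit-vector differences enter quadratically; the $O(h^k)$ normal error always appears contracted against another $O(h^k)$ quantity) and apply them in one pass via orthogonal projections, e.g.\ $n_h^+\cdot Pt_h=-(\nu\cdot t_h)(\nu\cdot n_h^+)=O(h^{2k})$ in place of the paper's cross-product manipulation. Where it is carried out, your version is cleaner and avoids the iteration; the paper's version has the advantage of being fully explicit about what $\delta_{\WH{e}_h}$ actually is, namely $\norm{\nabla\pi\,\overline{\tau}_h}_{l^2}$, which is the starting point of everything. Your final step for (\ref{7rel}) (a tangential, nearly unit vector nearly orthogonal to $\overline{\tau}$ must be $n^{\pm}$ up to $O(h^{k+1})$) is exactly the paper's concluding basis expansion $n_h=a\overline{\tau}+bn+c\nu$.

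Two soft spots. First, for (\ref{5rel}) you invoke a ``standard representation'' of the edge deformation as a product of curvature factors times an ``edge analogue'' of $\nu\cdot\nu_h$. No such factorization is standard (only the area identity $\delta_h=(\nu\cdot\nu_h)\prod_i(1-d\kappa_i)$ is, and that is what gives (\ref{2rel})); the paper has to derive $\delta_{\WH{e}_h}=\norm{\nabla\pi\,\overline{\tau}_h}_{l^2}$ from the chain of reference maps and then estimate it, and this is where most of its work lies. The gap is repairable with your own tools --- from $\nabla\pi\overline{\tau}_h=P\overline{\tau}_h-dH\overline{\tau}_h$ and $\norm{P\overline{\tau}_h}^2=1-(\nu\cdot\overline{\tau}_h)^2=1-O(h^{2k})$ (since $\nu_h\cdot\overline{\tau}_h=0$) one gets $\delta_{\WH{e}_h}=1+O(h^{k+1})$ directly --- but as written the step is asserted, not proved. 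Second, for (\ref{6rel}) you correctly observe that, because $R_{\WH{e}_h}$ lacks the trailing $P$, the expansion leaves a term $(P(\nu_h-\nu))\otimes\nu$ of size $O(h^{k})$ that only vanishes on tangential arguments; this means you prove the bound on $T\Gamma$ rather than as a full matrix-norm estimate. The paper sidesteps this entirely by asserting that (\ref{6rel}) ``follows exactly the same lines as (\ref{4rel})'', so your reading is, if anything, the more careful one, but you should state explicitly that you are proving the restricted version. Finally, in (\ref{7rel}) the sign of $b=(n,Pn_h^+)$ (i.e.\ that $Pn_h^+$ points outward from $\WH{K}_h^{\ell+}$ rather than inward) needs a word; the paper also only dispatches it with a smallness assumption on $h$.
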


For the sake of readability, we postpone the proof of Lemma \ref{Gamma2GammahSmall} to Appendix A. 

\subsection{Boundedness and stability}

We define the space of piecewise polynomial functions on $\Gamma_{h}$ as
$$\WT{S}_{hk}= \{\WT{\chi}\in L^2(\Gamma_{h}):\WT{\chi}|_{\WT{K}_{h}}\in \mathbb{P}^{k}(\WT{K}_{h})\ \ \ \forall \WT{K}_h\in\WT{\calT}_h\}.$$

We recall the following useful result from \cite{demlow2009higher}:

\begin{lemma}\label{Lemma4_2}
Let $v\in H^j(\WH{K}_h)$, $j\geq 2$, and let $\WT{v}=v \circ \pi_k$. Then, for $h$ small enough, we have that
\begin{subequations}
\begin{align}
\norm{v^{\ell}}_{L^2(\WH{K}_h^{\ell})}\sim&\norm{v}_{L^2(\WH{K}_h)}\sim\norm{\WT{v}}_{L^2(\WT{K}_h)},\label{normEquiv}\\
\norm{\nabla_{\Gamma}v^{\ell}}_{L^2(\WH{K}_{h}^{\ell})}\sim&\norm{\nabla_{\Gamma_h^k}v}_{L^2(\WH{K}_h)}
\sim\norm{\nabla_{\Gamma_h}\WT{v}}_{L^2(\WT{K}_h)},\label{semiNormEquiv}\\
\norm{D^j_{\Gamma_h^k}v}_{L^2(\WH{K}_h)}\lesssim& \sum_{1\leq m\leq j} \norm{D_\Gamma^m v^{\ell}}_{L^2(\WH{K}_h^{\ell})},\label{derEstimGammahk}\\
\norm{D^j_{\Gamma_h}\WT{v}}_{L^2(\WT{K}_h)}\lesssim& \sum_{1\leq m\leq j} \norm{D_{\Gamma_h^k}^m v}_{L^2(\WH{K}_h)}.\label{derEstimGammah}
\end{align}
\end{subequations}
\end{lemma}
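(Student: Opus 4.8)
The plan is to deduce all four groups of estimates from pointwise comparisons between the three surfaces, combined with the geometric bounds of Lemma \ref{Gamma2GammahSmall}. Throughout, I would work on a single element and exploit that $\Gamma$, $\Gamma_{h}^{k}$ and $\Gamma_{h}$ are mutually related by the maps $\pi$ (projection onto $\Gamma$) and $\pi_{k}$ (the degree-$k$ map defining $\Gamma_{h}^{k}$ from $\Gamma_{h}$), each of which is, for $h$ small, a diffeomorphism with Jacobian uniformly close to the identity. The strategy is then: (i) obtain the $L^{2}$-equivalences (\ref{normEquiv}) by change of variables using the area-deformation factors; (ii) obtain the gradient equivalences (\ref{semiNormEquiv}) from the lifting formula and the near-isometry of the associated tangential maps; and (iii) obtain the higher-order bounds (\ref{derEstimGammahk})--(\ref{derEstimGammah}) via the chain rule with uniformly bounded derivatives of $\pi$ and $\pi_{k}$.

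For (\ref{normEquiv}), I would simply change variables. Writing $\delta_{h}\dAhk = \dA$, one has
\[ \norm{v^{\ell}}_{L^{2}(\WH{K}_{h}^{\ell})}^{2} = \int_{\WH{K}_{h}} |v|^{2}\,\delta_{h}\,\dAhk, \]
and since (\ref{2rel}) gives $\norm{1-\delta_{h}}_{L^{\infty}} \lesssim h^{k+1}$, the factor $\delta_{h}$ is bounded above and below by constants independent of $h$; this yields $\norm{v^{\ell}}_{L^{2}(\WH{K}_{h}^{\ell})} \sim \norm{v}_{L^{2}(\WH{K}_{h})}$. The equivalence with $\norm{\WT{v}}_{L^{2}(\WT{K}_{h})}$ follows identically, using that the area-deformation factor of $\pi_{k}$ between $\WT{K}_{h}$ and $\WH{K}_{h}$ is likewise uniformly bounded above and below, a standard consequence of $\pi_{k}$ being a degree-$k$ Lagrange interpolant of $\pi$ with uniformly controlled first derivatives.

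For (\ref{semiNormEquiv}), I would use the lifting identity (\ref{eq:liftingOfStiffnessTerm}) with $u_{h}=v_{h}=v$ and dropping the zeroth-order term, giving
\[ \norm{\nabla_{\Gamma_{h}^{k}}v}_{L^{2}(\WH{K}_{h})}^{2} = \int_{\WH{K}_{h}^{\ell}} R_{h}\,\nablalb v^{\ell}\cdot\nablalb v^{\ell}\,\dA. \]
Because $\nablalb v^{\ell}$ is tangential and $R_{h}$ is symmetric and, by (\ref{4rel}), satisfies $\norm{P-R_{h}}_{L^{\infty}} \lesssim h^{k+1}$, the map $R_{h}$ acts on tangent vectors as a uniformly coercive and bounded form for $h$ small; hence $R_{h}\,\nablalb v^{\ell}\cdot\nablalb v^{\ell} \sim |\nablalb v^{\ell}|^{2}$ pointwise, and the middle equivalence follows. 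The equivalence with $\norm{\nablalbh\WT{v}}_{L^{2}(\WT{K}_{h})}$ is obtained by the same argument applied to the pair $(\Gamma_{h},\Gamma_{h}^{k})$ via $\pi_{k}$, the relevant tangential transformation again being uniformly close to a projection.

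The higher-order estimates (\ref{derEstimGammahk}) and (\ref{derEstimGammah}) are the main obstacle. Here I would differentiate the compositions $v^{\ell}=v\circ x(\cdot)$ and $\WT{v}=v\circ\pi_{k}$ repeatedly and apply the Fa\`a di Bruno chain rule: each surface derivative $D^{j}_{\Gamma_{h}^{k}}v$ expands into a sum of terms, each consisting of a derivative $D^{m}_{\Gamma}v^{\ell}$ of order $1\le m\le j$ multiplied by products of derivatives of the transformation map of total order at most $j$. The key point is that all derivatives of $\pi$ up to order $j$ are bounded uniformly (since $\Gamma$ is smooth and compact and the tube $U$ is fixed), while the tangential derivatives of $\pi_{k}$, a degree-$k$ polynomial, are controlled uniformly after the standard $h$-scaling; the local area factors are again bounded. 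Integrating and invoking the $L^{2}$-equivalences (\ref{normEquiv}) then produces the stated sum over $1\le m\le j$, the appearance of all lower orders reflecting that low-order function derivatives are paired with high-order map derivatives. The bookkeeping of these chain-rule terms, together with the uniform-in-$h$ control of the derivatives of $\pi_{k}$, constitutes the technical heart of the argument; by contrast, the first two groups of estimates are comparatively direct consequences of Lemma \ref{Gamma2GammahSmall} and the lifting formula.
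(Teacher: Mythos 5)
Your outline is sound: the paper itself gives no proof of this lemma but simply recalls it from \cite{demlow2009higher}, and the argument in that reference proceeds exactly along the lines you describe --- change of variables with the uniformly bounded area deformations for \eqref{normEquiv}, the tangential transformation rule together with the $O(h^{k+1})$ closeness of $R_h$ to $P$ for \eqref{semiNormEquiv}, and a chain-rule expansion with uniformly bounded derivatives of $\pi$ and $\pi_k$ for \eqref{derEstimGammahk}--\eqref{derEstimGammah}. The only presentational quibble is that one should invoke the gradient part of the change-of-variables identity directly (it holds separately, as its derivation shows) rather than ``dropping'' the zeroth-order term from \eqref{eq:liftingOfStiffnessTerm}, and the $\Gamma_h \leftrightarrow \Gamma_h^k$ comparisons rest on geometric estimates for $\pi_k$ analogous to, but not literally contained in, Lemma \ref{Gamma2GammahSmall}; both points are standard and covered by the cited reference.
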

We will also need the following inverse inequality, adapted from \cite[Thm 3.2.6]{Ciarlet2002}.
\begin{lemma}\label{thm:ciarletInverse}
Let $l, m$ be two integers such that $0 \leq l\leq m$. Then,
$$|v_h|_{H^m(\WT{K}_{h})} \lesssim h_{\WT{K}_{h}}^{l-m}|v_h|_{H^l(\WT{K}_h)} \qquad \forall v_h\in \WT{S}_{hk}.$$
\end{lemma}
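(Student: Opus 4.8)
The final statement is the inverse inequality of Lemma \ref{thm:ciarletInverse}, asserting that $|v_h|_{H^m(\WT{K}_{h})} \lesssim h_{\WT{K}_{h}}^{l-m}|v_h|_{H^l(\WT{K}_h)}$ for all $v_h \in \WT{S}_{hk}$. The plan is to reduce the estimate on a physical triangle $\WT{K}_{h}$ to a corresponding estimate on the fixed reference element $K$ by pulling back through the affine map, then exploit the finite dimensionality of the polynomial space $\mathbb{P}^{k}(K)$ to obtain the reference-element bound for free, and finally push forward, tracking the powers of $h_{\WT{K}_{h}}$ that the chain rule and Jacobian determinant produce.

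First I would fix the affine parametrisation $F_{\WT{K}_{h}} \colon K \to \WT{K}_{h}$ with $F_{\WT{K}_{h}}(\hat{x}) = B\hat{x} + b$ and write $\hat{v} = v_h \circ F_{\WT{K}_{h}} \in \mathbb{P}^{k}(K)$. Standard scaling estimates for affine-equivalent elements (this is precisely the content of \cite[Thm 3.2.6]{Ciarlet2002} that the lemma adapts) give the two-sided bounds $|v_h|_{H^s(\WT{K}_{h})} \lesssim \norm{B^{-1}}^{s}\,|\det B|^{1/2}\,|\hat{v}|_{H^s(K)}$ and, symmetrically, $|\hat{v}|_{H^s(K)} \lesssim \norm{B}^{s}\,|\det B|^{-1/2}\,|v_h|_{H^s(\WT{K}_{h})}$, for each integer $s \geq 0$. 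Because $\WT{K}_{h}$ is a planar triangle belonging to a regular (shape-regular) family of triangulations, one has the geometric control $\norm{B} \lesssim h_{\WT{K}_{h}}$ and $\norm{B^{-1}} \lesssim h_{\WT{K}_{h}}^{-1}$, the latter relying on shape regularity so that the diameter of the inscribed ball is comparable to $h_{\WT{K}_{h}}$.

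Next I would establish the estimate on the reference element itself. On the fixed domain $K$ the maps $\hat{v} \mapsto |\hat{v}|_{H^m(K)}$ and $\hat{v} \mapsto |\hat{v}|_{H^l(K)}$ are both continuous on the finite-dimensional space $\mathbb{P}^{k}(K)$; in fact the latter is a norm on the quotient of $\mathbb{P}^{k}(K)$ by polynomials of degree less than $l$, but the cleanest route is simply to invoke equivalence of norms on the finite-dimensional space together with $l \leq m$ to write $|\hat{v}|_{H^m(K)} \lesssim |\hat{v}|_{H^l(K)}$ with a constant depending only on $K$, $k$, $l$, $m$. Chaining the three inequalities then gives
\begin{equation*}
|v_h|_{H^m(\WT{K}_{h})} \lesssim \norm{B^{-1}}^{m}|\det B|^{1/2}|\hat{v}|_{H^m(K)} \lesssim \norm{B^{-1}}^{m}|\det B|^{1/2}|\hat{v}|_{H^l(K)} \lesssim \norm{B^{-1}}^{m}\norm{B}^{l}|v_h|_{H^l(\WT{K}_{h})},
\end{equation*}
and substituting $\norm{B} \lesssim h_{\WT{K}_{h}}$ and $\norm{B^{-1}} \lesssim h_{\WT{K}_{h}}^{-1}$ yields the factor $h_{\WT{K}_{h}}^{l-m}$, as claimed.

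The main obstacle here is bookkeeping rather than conceptual: one must verify that the constants in the scaling estimates and in the finite-dimensional norm-equivalence are genuinely independent of $h_{\WT{K}_{h}}$, which is where shape regularity of the family $\WT{\calT}_h$ is essential — without it the bound $\norm{B^{-1}} \lesssim h_{\WT{K}_{h}}^{-1}$ fails. Since the lemma is quoted as an adaptation of a textbook result, the proof is really a verification that the planar-triangle setting of the $\WT{K}_{h}$ (the vertices lie on $\Gamma$ but the triangles are flat, so the maps $F_{\WT{K}_{h}}$ are affine) falls squarely within the hypotheses of \cite[Thm 3.2.6]{Ciarlet2002}; the surface curvature plays no role at this stage because the inequality is stated on $\Gamma_{h}$ and its flat elements. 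I would therefore present the argument concisely, citing the scaling lemma and emphasising the role of regularity of the triangulation.
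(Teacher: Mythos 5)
Your argument is correct and is precisely the standard proof of the result the paper simply cites (\cite[Thm 3.2.6]{Ciarlet2002}): pull back to the reference element via the affine map, use equivalence of (semi)norms on the finite-dimensional space $\mathbb{P}^{k}(K)$ modulo $\mathbb{P}^{l-1}(K)$, and push forward, with shape regularity giving $\norm{B}\lesssim h_{\WT{K}_{h}}$ and $\norm{B^{-1}}\lesssim h_{\WT{K}_{h}}^{-1}$. The only point worth making explicit is that $\WT{K}_{h}$ is a flat triangle embedded in $\mathbb{R}^{3}$, so after an isometric identification of its plane with $\mathbb{R}^{2}$ the tangential seminorms reduce to the planar ones and the textbook hypotheses apply verbatim, as you observe.
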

Finally, we prove the following trace inequality:
\begin{lemma}\label{traceInequalityGammahk} 
For sufficiently small $h$, we have that
$$\norm{\nabla_{\Gamma_h^k}\WH{w}_h}^2_{L^2(\partial \WH{K}_{h})}\lesssim  h^{-1}\norm{\nabla_{\Gamma_h^k} \WH{w}_h}^2_{L^2(\WH{K}_{h})} \ \ \ \ \forall \WH{w}_h\in \WH{S}_{hk}.$$
\end{lemma}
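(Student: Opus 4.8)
The statement is a standard discrete trace inequality, but posed on the curved element $\WH{K}_h \subset \Gamma_h^k$ rather than on a flat reference element. The plan is to transfer the inequality to a setting where a classical trace inequality is available, and then transfer it back, controlling the geometric distortion using the estimates already collected. I would proceed in three steps.

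\medskip

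\textbf{Step 1: Reduce to the polyhedral surface $\Gamma_h$.} For $\WH{w}_h \in \WH{S}_{hk}$ there is a corresponding function $\WT{w}_h = \WH{w}_h \circ \pi_k \in \WT{S}_{hk}$ living on the flat triangle $\WT{K}_h \subset \Gamma_h$. The norm equivalences in Lemma \ref{Lemma4_2}, in particular \eqref{semiNormEquiv}, give $\norm{\nabla_{\Gamma_h^k}\WH{w}_h}_{L^2(\WH{K}_h)} \sim \norm{\nabla_{\Gamma_h}\WT{w}_h}_{L^2(\WT{K}_h)}$; the analogous equivalence holds on the boundary once one accounts for the edge area deformation, which is uniformly bounded above and below by \eqref{5rel}. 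Thus it suffices to prove the inequality for $\WT{w}_h \in \mathbb{P}^k(\WT{K}_h)$ on the flat triangle $\WT{K}_h$, with the gradient taken as the tangential gradient $\nabla_{\Gamma_h}$ on the planar element.

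\medskip

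\textbf{Step 2: Classical scaling on the flat triangle.} On the planar triangle $\WT{K}_h$, pulling back to the reference element $K$ via the affine map, a standard trace inequality for polynomials (see e.g. the reference Ciarlet result invoked in Lemma \ref{thm:ciarletInverse}) gives, for a fixed polynomial on the reference element, control of the boundary $L^2$ norm by the interior $L^2$ norm. Applying the affine scaling $\WT{K}_h \to K$ and tracking the powers of $h_{\WT{K}_h}$ introduced by the Jacobian on the area and edge measures yields
$$\norm{\nabla_{\Gamma_h}\WT{w}_h}^2_{L^2(\partial \WT{K}_h)} \lesssim h_{\WT{K}_h}^{-1}\norm{\nabla_{\Gamma_h}\WT{w}_h}^2_{L^2(\WT{K}_h)}.$$
This is purely a statement about polynomials on an affine-equivalent family of triangles and is where the $h^{-1}$ factor originates. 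Shape regularity of $\WT{\calT}_h$ ensures the constant is uniform and that $h_{\WT{K}_h} \sim h$.

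\medskip

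\textbf{Step 3: Transfer back to $\WH{K}_h$.} Combining Steps 1 and 2, and using the boundary analogue of \eqref{semiNormEquiv} together with the edge deformation bound \eqref{5rel} to pass from $\partial \WT{K}_h$ back to $\partial \WH{K}_h$, delivers the claim after summing, or simply on each element. The constant absorbs the geometric equivalence constants, which are uniform for $h$ small by Lemma \ref{Gamma2GammahSmall}.

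\medskip

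The main obstacle is not the scaling argument itself, which is classical, but the bookkeeping in Step 1: the tangential gradient $\nabla_{\Gamma_h^k}$ on the curved element is related to $\nabla_{\Gamma_h}$ on the flat element through the map $\pi_k$, whose derivative distorts directions and lengths. One must verify that the boundary-norm equivalence holds with constants independent of $h$ and of the particular element, which is precisely what the higher-order surface approximation estimates of Lemma \ref{Lemma4_2} and the edge estimate \eqref{5rel} are designed to provide. Care is also needed because $\nabla_{\Gamma_h^k}$ acts tangentially to the curved surface while the classical trace inequality is stated for the full polynomial on a flat domain; restricting attention to the tangential component and using that $\nabla_{\Gamma_h^k}\WH{w}_h \in \WH{\Sigma}_{hk}$ keeps the argument consistent.
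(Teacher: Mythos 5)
Your plan is correct and follows essentially the same route as the paper: pull back to the flat element $\WT{K}_h$ via $\pi_k$, apply a trace inequality there, and transfer back using the gradient equivalence of Lemma \ref{Lemma4_2} together with the edge deformation bounds. The only cosmetic difference is that the paper obtains the flat-element estimate by combining the continuous trace theorem with the inverse inequality of Lemma \ref{thm:ciarletInverse} rather than by a direct reference-element scaling for polynomials, and it quantifies the $\Gamma_h\to\Gamma_h^k$ edge deformation as the ratio $\delta_{\WT{e}_h}/\delta_{\WH{e}_h}$ rather than invoking \eqref{5rel} alone; both are equivalent.
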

\begin{proof}
Defining $\delta_{\WT{e}_{h}}  = \ds/\dsh$ and $\delta_{\WT{e}_{h} \rightarrow \WH{e}_{h}}  = \dshk/\dsh$, using (\ref{5rel}) and a Taylor expansion argument, we obtain
$$|1-\delta_{\WT{e}_{h} \rightarrow \WH{e}_{h}}|
=\abs{1-\frac{\delta_{\WT{e}_{h}}}{\delta_{\WH{e}_{h}}}}
=\abs{1-\frac{1+O(h^{2})}{1+O(h^{k+1})}}\lesssim h^2.$$
Now let $\WT{w}_h\in \WT{S}_{hk}$ be such that $\WT{w}_h=\WH{w}_h\circ \pi_k$.
From (2.21) in \cite{demlow2009higher} we have that
\begin{align}\label{eq:Gammah2GammahkEquivalence}
\nabla_{\Gamma_h^k}\WH{w}_h \lesssim  \nabla_{\Gamma_h}\WT{w}_h,
\end{align}
provided $h$ is sufficiently small.
Applying the trace theorem for polynomial functions on $\Gamma_{h}$ as given in Lemma 3.4 in \cite{dedner2013surfaces}, and the inverse inequality in Lemma \ref{thm:ciarletInverse} (with $l=1$ and $m=2$), we get
\begin{equation*}
\int_{\partial \WT{K}_h}|\nabla_{\Gamma_h}\WT{w}_h|^2\dsh \lesssim \frac{1}{h}\norm{\nabla_{\Gamma_h}\WT{w}_h}^2_{L^2(\WT{K}_h)}.
\end{equation*}
Surface lifting the left-hand side to $\Gamma_h^k$, making use of (\ref{eq:Gammah2GammahkEquivalence}) and using (\ref{semiNormEquiv}) for the right-hand side we have that
$$\int_{\partial \WH{K}_{h}}|\nabla_{\Gamma_h^k}\WH{w}_h|^2 \delta_{\WT{e}_{h} \rightarrow \WH{e}_{h}}^{-1} \dshk\lesssim \frac{1}{h}\norm{\nabla_{\Gamma_h^k}\WH{w}_h}^2_{L^2(\WH{K}_{h})}.$$
We thus obtain, using (\ref{5rel}), 
$$(1-Ch^2)\norm{\nabla_{\Gamma_h^k}\WH{w}_h}_{L^2(\partial \WH{K}_{h})}^2\lesssim \frac{1}{h}\norm{\nabla_{\Gamma_h^k}\WH{w}_h}^2_{L^2(\WH{K}_{h})},$$
which yields the desired result for $h$ small enough.
\end{proof}

In order to perform a unified analysis of the surface DG methods presented in Section~\ref{DGfluxes}, we introduce the stablization function
\begin{subnumcases}{S_{h}(u_{h},v_{h})=}  \sum_{\WH{e}_{h}\in\WH{\calE}_{h}}\beta_{\WH{e}_{h}} \int_{\WH{e}_{h}}[u_h][v_h] \dshk , & \label{S1} \\
\sum_{\WH{e}_{h} \in \WH{\calE}_{h}} \eta_{\WH{e}_{h}} \int_{\Gamma_{h}^{k}} r_{\WH{e}_{h}} ([u_h])\cdot r_{\WH{e}_{h}}([v_h])\dAhk , & \label{S2}
\end{subnumcases}
for $u_h,\ v_h\in \WH{S}_{hk}$, cf. also Table \ref{tab:DGMethods}. 

\begin{table}[!htb]
\begin{center}
\begin{tabular}{| c | c |}
  \hline
  Method & Stabilization function $S_{h}(\cdot,\cdot)$ \\
  \hline  
  \begin{tabular}{c} IP \cite{douglas1976interior} \\  NIPG \cite{riviere1999improved}\\  IIPG \cite{Dawson20042565} \\  LDG \cite{cockburn1998local} \end{tabular} & (\ref{S1})\\
  \hline
\begin{tabular}{c} Brezzi et al. \cite{brezzi1999discontinuous} \\ Bassi et al. \cite{bassi1997high2} \end{tabular} & (\ref{S2})\\
  \hline  
\end{tabular}
\end{center}
\caption{Stabilization function of the DG methods considered in our unified analysis.}
\label{tab:DGMethods}
\end{table}

The next result, together with the Lax-Milgram Lemma, guarantees that there exists a unique solution $u_{h} \in \WH{S}_{hk}$ of  \eqref{eq:PrimalForm} that satisfies the stability estimate
\begin{equation}\label{eq:stabilityEstimateGamma1}
\normDG{u_{h}} \lesssim  \norm{f_{h}}_{L^{2}(\Gamma_{h}^{k})},
\end{equation}
where the DG norm $\norm{\cdot}_{DG}$ is given by
\begin{equation}\label{def:DGNorm}
\begin{aligned}
\normDG{u_h}^2=\normh{u_h}^2+\normast{u_h}^2
&& \forall u_h\in \WH{S}_{hk},
\end{aligned}
\end{equation}
with
\begin{equation*}
\norm{u_h}_{1,h}^2 = \sum_{\WH{K}_{h}\in\WH{\calT}_h}\norm{u_h}^2_{H^1(\WH{K}_{h})},
\end{equation*}
and
\begin{equation*} 
\normast{u_h}^2 = S_{h}(u_{h},u_{h}),
\end{equation*}
where $S_{h}(\cdot,\cdot)$ depends on the method under investigation and is defined as in (\ref{S1})-(\ref{S2}).

We will now consider boundedness and stability of the bilinear forms $\ADG(\cdot,\cdot)$ corresponding to the surface DG methods given in Table \ref{tab:DGMethods}. We first state some estimates required for the analysis of the surface LDG method.

\begin{lemma}\label{re&leToAverage}
For any $v_h\in \WH{S}_{hk}$,
\begin{align*}\alpha \norm{r_{\WH{e}_{h}}([v_h])}_{L^2(\Gamma_h^k)}^2 &\lesssim \beta_{\WH{e}_{h}}\norm{[v_h]}_{L^2(\WH{e}_{h})}^2,\\
\alpha \norm{l_{\WH{e}_{h}}([v_h])}_{L^2(\Gamma_h^k)}^2 &\lesssim \beta_{\WH{e}_{h}}\norm{[v_h]}_{L^2(\WH{e}_{h})}^2,
\end{align*}
on each $\WH{e}_{h}\in \WH{\calE}_{h}$.
\end{lemma}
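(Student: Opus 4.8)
The plan is to bound the $L^2(\Gamma_h^k)$ norm of the lifting operator $r_{\WH{e}_{h}}([v_h])$ by exploiting its defining variational identity, then use a trace inequality to pass from the edge to the element, and finally absorb the resulting constants into the penalty parameter $\beta_{\WH{e}_{h}}$. Recall that $r_{\WH{e}_{h}}([v_h]) \in \WH{\Sigma}_{hk}$ satisfies
\begin{equation*}
\int_{\Gamma_h^k} r_{\WH{e}_{h}}([v_h])\cdot \tau_h \dAhk = -\int_{\WH{e}_{h}}[v_h]\{\tau_h;n_{h}\}\dshk \qquad \forall \tau_h \in \WH{\Sigma}_{hk}.
\end{equation*}
The natural first step is to test this identity with the admissible choice $\tau_h = r_{\WH{e}_{h}}([v_h])$ itself, which gives
\begin{equation*}
\norm{r_{\WH{e}_{h}}([v_h])}_{L^2(\Gamma_h^k)}^2 = -\int_{\WH{e}_{h}}[v_h]\{r_{\WH{e}_{h}}([v_h]);n_{h}\}\dshk.
\end{equation*}

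Next I would estimate the right-hand side by Cauchy--Schwarz on the edge $\WH{e}_{h}$, bounding the average $\{r_{\WH{e}_{h}}([v_h]);n_{h}\}$ in $L^2(\WH{e}_{h})$ by the $L^2$ norm of $r_{\WH{e}_{h}}([v_h])$ on the two adjacent elements restricted to their boundary. This is where the \emph{trace inequality} of Lemma \ref{traceInequalityGammahk} enters: applied to the polynomial $r_{\WH{e}_{h}}([v_h])$ (componentwise, since $r_{\WH{e}_{h}}([v_h]) \in \WH{\Sigma}_{hk}$ is piecewise polynomial), it yields $\norm{r_{\WH{e}_{h}}([v_h])}_{L^2(\partial \WH{K}_h)}^2 \lesssim h^{-1}\norm{r_{\WH{e}_{h}}([v_h])}_{L^2(\WH{K}_h)}^2$. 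Combining Cauchy--Schwarz with this trace bound produces
\begin{equation*}
\norm{r_{\WH{e}_{h}}([v_h])}_{L^2(\Gamma_h^k)}^2 \lesssim \norm{[v_h]}_{L^2(\WH{e}_{h})} \, h^{-1/2} \norm{r_{\WH{e}_{h}}([v_h])}_{L^2(\Gamma_h^k)}.
\end{equation*}
Dividing through by one factor of $\norm{r_{\WH{e}_{h}}([v_h])}_{L^2(\Gamma_h^k)}$ and squaring gives $\norm{r_{\WH{e}_{h}}([v_h])}_{L^2(\Gamma_h^k)}^2 \lesssim h^{-1}\norm{[v_h]}_{L^2(\WH{e}_{h})}^2$. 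Since $\beta_{\WH{e}_{h}} = \alpha k^2 h_{\WH{e}_{h}}^{-1}$ by \eqref{penalityParameters}, the factor $h^{-1}$ is precisely $\alpha^{-1}k^{-2}\beta_{\WH{e}_{h}}$ up to mesh-regularity constants, and multiplying both sides by $\alpha$ delivers $\alpha\norm{r_{\WH{e}_{h}}([v_h])}_{L^2(\Gamma_h^k)}^2 \lesssim \beta_{\WH{e}_{h}}\norm{[v_h]}_{L^2(\WH{e}_{h})}^2$. The argument for $l_{\WH{e}_{h}}$ is identical, using its own defining identity with the jump bracket $[\tau_h;n_{h}]$ in place of the average $\{\tau_h;n_{h}\}$.

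The main obstacle I anticipate is the careful bookkeeping in the trace step: one must verify that the average operator $\{\cdot;n_{h}\}$ restricted to $\WH{e}_{h}$ is genuinely controlled by boundary traces of the piecewise polynomial $r_{\WH{e}_{h}}([v_h])$ on the (at most two) elements sharing that edge, and that Lemma \ref{traceInequalityGammahk} may be applied componentwise despite $\WH{\Sigma}_{hk}$ being defined through the pseudo-inverse $\nabla F_{\WH{K}_{h}}^{-T}$ rather than as raw polynomials. One should also keep track of whether the $k^2$ appearing in $\beta_{\WH{e}_{h}}$ is needed here or merely harmless; since the lifting operator gains only the $h^{-1}$ scaling, the $k$-dependence sits on the favorable side of the inequality and the claimed bound holds with room to spare.
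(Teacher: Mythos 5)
Your argument is correct and coincides with the one the paper relies on: the paper's own ``proof'' is a single-line citation of Lemma 2.3 in Antonietti--Houston, whose content is exactly your chain of testing the defining identity with $\tau_h=r_{\WH{e}_{h}}([v_h])$ (which is supported on the two elements sharing $\WH{e}_{h}$), Cauchy--Schwarz on the edge, a discrete trace inequality for the piecewise polynomial lifting, and absorption of the resulting $h^{-1}$ into $\beta_{\WH{e}_{h}}=\alpha k^2 h_{\WH{e}_{h}}^{-1}$. The caveats you flag --- applying the trace bound to elements of $\WH{\Sigma}_{hk}$ rather than to gradients of functions in $\WH{S}_{hk}$, and the favourable placement of the $k^2$ factor --- are precisely the ``proper definition of the DG lift operators'' adjustments the paper alludes to, and they do not affect the bound.
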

\begin{proof}
The proof is the same as \cite[Lemma 2.3]{antoniettiHouston} provided proper definition of the DG lift operators.
\end{proof}

\begin{lemma}\label{Lemma:boundStabGammakIP}
The bilinear forms $\ADG(\cdot, \cdot)$ corresponding to the surface DG methods given in Table \ref{tab:DGMethods} are continuous and coercive in the DG norm \eqref{def:DGNorm}, i.e.,
\begin{equation*}
\begin{aligned}
&\ADG(u_h,v_h)\lesssim  \normDG{u_h}\normDG{v_h},
&&\ADG(u_h,u_h)\gtrsim  \normDG{u_h}^2,
\end{aligned}
\end{equation*}
for every $u_h,v_h \in \WH{S}_{hk}$.

For the surface IP, Bassi et al. and IIPG methods, coercivity holds provided the penalty parameter $\alpha$ appearing in the definition of $\beta_{\WH{e}_{h}}$ or $\eta_{\WH{e}_{h}}$ in (\ref{penalityParameters}) is chosen sufficiently large.
\end{lemma}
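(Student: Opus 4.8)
The plan is to prove continuity and coercivity of $\ADG(\cdot,\cdot)$ in the DG norm \eqref{def:DGNorm} by treating the surface methods in close analogy with the planar unified analysis of \cite{arnold2002unified}, using the stabilization function $S_h$ from Table \ref{tab:DGMethods} and the technical tools established above. First I would recall that all the bilinear forms in \eqref{eq:BassiRebayGammahForm}, \eqref{eq:BrezziEtAlGammahForm}, \eqref{eq:InteriorPenaltyGammahForm} and \eqref{eq:LDGGammahForm} share a common structure: a volume term $\sum_{\WH{K}_h}\int_{\WH{K}_h}\nabla_{\Gamma_h^k}u_h\cdot\nabla_{\Gamma_h^k}v_h + u_h v_h\dAhk$, consistency terms involving $\{\nabla_{\Gamma_h^k}u_h;n_h\}[v_h]$ and its symmetric counterpart, and a stabilization contribution equal to $S_h(u_h,v_h)$ (plus, for LDG, lower-order lifting terms controlled by Lemma \ref{re&leToAverage}). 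The volume term is bounded above and below by $\normh{u_h}^2$, so the whole argument reduces to controlling the edge/consistency terms by the two pieces of $\normDG{\cdot}^2$.

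For \textbf{continuity}, I would estimate each term by Cauchy--Schwarz. The volume part is immediate. For the consistency term $\sum_{\WH{e}_h}\int_{\WH{e}_h}\{\nabla_{\Gamma_h^k}u_h;n_h\}[v_h]\dshk$ the key step is to pass the tangential-gradient trace from the edge back into the element: apply Cauchy--Schwarz on each edge, then invoke the trace inequality of Lemma \ref{traceInequalityGammahk} to bound $\norm{\nabla_{\Gamma_h^k}u_h}_{L^2(\partial\WH{K}_h)}^2\lesssim h^{-1}\norm{\nabla_{\Gamma_h^k}u_h}_{L^2(\WH{K}_h)}^2$. Matching the scaling $h^{-1/2}$ that this produces against the weight $\beta_{\WH{e}_h}^{1/2}\sim (\alpha k^2 h_{\WH{e}_h}^{-1})^{1/2}$ hidden in $\normast{[v_h]}$, I would write $\{\nabla_{\Gamma_h^k}u_h;n_h\}[v_h]=(\beta_{\WH{e}_h}^{-1/2}\{\nabla_{\Gamma_h^k}u_h;n_h\})(\beta_{\WH{e}_h}^{1/2}[v_h])$ so that the first factor is controlled by $\normh{u_h}$ and the second by $\normast{v_h}$. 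The stabilization term is bounded directly by $\normast{u_h}\normast{v_h}$ by its very definition, and for the Brezzi/Bassi methods using \eqref{S2} one uses that $r_{\WH{e}_h}([u_h])$ is precisely what appears. For LDG, the additional $r_h,l_h$ volume terms are handled by Lemma \ref{re&leToAverage}, which converts their $L^2(\Gamma_h^k)$ norms into $\beta_{\WH{e}_h}\norm{[\cdot]}_{L^2(\WH{e}_h)}^2$, i.e. into $\normast{\cdot}^2$. Summing over edges and elements and applying discrete Cauchy--Schwarz yields the bound $\ADG(u_h,v_h)\lesssim\normDG{u_h}\normDG{v_h}$.

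For \textbf{coercivity}, I would set $v_h=u_h$. The volume term gives exactly $\normh{u_h}^2$ up to the equivalence of norms, and the stabilization term contributes $\normast{u_h}^2$ by definition of $S_h$. For the symmetric interior-penalty-type methods (IP, Bassi et al., IIPG) the two consistency terms combine into $-2\sum_{\WH{e}_h}\int_{\WH{e}_h}\{\nabla_{\Gamma_h^k}u_h;n_h\}[u_h]\dshk$, which must be absorbed. Here I would use Young's inequality with a small parameter $\varepsilon$ together with Lemma \ref{traceInequalityGammahk}, estimating this cross term by $\varepsilon\normh{u_h}^2 + C\varepsilon^{-1}\sum_{\WH{e}_h}h\norm{\{\nabla_{\Gamma_h^k}u_h;n_h\}}_{L^2(\WH{e}_h)}^2$ and then bounding the gradient trace against $\normast{u_h}^2$ using that $\beta_{\WH{e}_h}\sim\alpha k^2 h^{-1}$. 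Choosing $\alpha$ large enough so that the penalty weight dominates the accumulated constant $C\varepsilon^{-1}$ forces the cross term to be absorbed into a fraction of $\normast{u_h}^2 + \varepsilon\normh{u_h}^2$, leaving $\ADG(u_h,u_h)\gtrsim\normDG{u_h}^2$. The NIPG method is coercive for any $\alpha>0$ since its skew-symmetric consistency terms cancel when $v_h=u_h$, and the Bassi--Rebay/Brezzi forms are coercive because their lifting stabilization $r_{\WH{e}_h}$ provides the needed control without a largeness condition, consistent with the statement singling out IP, Bassi et al. and IIPG.

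The main obstacle I anticipate is the careful bookkeeping of the surface-specific geometry in the consistency estimates. Unlike the planar case, here $n_h^+\neq -n_h^-$ in general, so the trace operators $\{\cdot;n_h\}$ and $[\cdot;n_h\}]$ do not simplify as in \cite{arnold2002unified}; one must verify that the integration-by-parts formula of Lemma \ref{eq:intByParts} and the trace inequality of Lemma \ref{traceInequalityGammahk} are genuinely applied on $\Gamma_h^k$ with the correct conormals, and that the geometric perturbation factors (e.g. the $\delta_{\WH{e}_h}$, $\delta_{\WT{e}_h\to\WH{e}_h}$ deformations appearing in Lemma \ref{Gamma2GammahSmall} and in the proof of Lemma \ref{traceInequalityGammahk}) do not spoil the scaling, being of order $h^{k+1}$ or $h^2$ and hence absorbable for $h$ small. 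Getting the constant $\alpha$ threshold to be independent of the meshsize --- which is exactly what the final sentence of the lemma asserts --- hinges on keeping the trace-inverse constant $h$-independent, which is precisely guaranteed by Lemma \ref{traceInequalityGammahk}.
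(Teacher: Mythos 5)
Your proposal follows essentially the same route as the paper: continuity via Cauchy--Schwarz combined with the trace inequality of Lemma~\ref{traceInequalityGammahk} (for the $S_h$ in~(\ref{S1})) and the lifting-operator identities together with~(\ref{relationrhre}) and Lemma~\ref{re&leToAverage} (for~(\ref{S2}) and LDG), and coercivity by taking $v_h=u_h$, absorbing the consistency cross terms with Young's inequality and a sufficiently large penalty parameter $\alpha$, exactly as in the paper's proof. One imprecision worth fixing: your explanation that ``the Bassi--Rebay/Brezzi forms are coercive because their lifting stabilization $r_{\WH{e}_{h}}$ provides the needed control without a largeness condition'' is not the right mechanism --- if the absorption of $2\sum_{\WH{K}_{h}}\int_{\WH{K}_{h}} r_h([u_h])\cdot\nabla_{\Gamma_h^k}u_h$ relied only on the $\eta_{\WH{e}_{h}}\,r_{\WH{e}_{h}}\cdot r_{\WH{e}_{h}}$ stabilization, one would again need $\alpha$ large (that is precisely the Bassi et al.\ situation, which the lemma does list); what makes Brezzi et al.\ unconditional is the additional unweighted $r_h([u_h])\cdot r_h([v_h])$ volume term in~(\ref{eq:BrezziEtAlGammahForm}), which completes the square $\|\nabla_{\Gamma_h^k}u_h+r_h([u_h])\|^2\geq 0$ with the consistency term, leaving the $\eta_{\WH{e}_{h}}$ term free to supply $\normast{u_h}^2$ (and note that the plain Bassi--Rebay method is not among the methods of Table~\ref{tab:DGMethods} covered by the lemma).
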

\begin{proof}
For all the methods stabilized with $S_h(\cdot,\cdot)$ defined as in (\ref{S1}), Lemma \ref{traceInequalityGammahk} implies that
\begin{equation}\label{eq:EstimateMixTerm_a}
\begin{aligned}
\sum_{\WH{e}_{h}\in\WH{\calE}_{h}}\norm{[u_h]}_{L^2(\WH{e}_{h})}\norm{\{\nabla_{\Gamma_h^k} v_h; n_h\}}_{L^2(\WH{e}_{h})}\lesssim&
 \sum_{\WH{K}_{h}\in\WH{\calT}_h} \alpha^{-\half}  \normast{u_h}\norm{\nabla_{\Gamma_h^k} v_h}_{L^2(\WH{K}_{h})}\\
\lesssim & \alpha^{-\half} \normast{u_h}\normh{v_h},
\end{aligned}
\end{equation}
where the hidden constant depends on the degree of the polynomial approximation but not on the penalty parameters $\beta_{\WH{e}_{h}}$.
Otherwise, if $S_{h}(\cdot,\cdot)$ is given as in (\ref{S2}), we observe that for $u_h,v_h\in \WH{S}_{hk}$ we have that 
$$\sum_{\WH{e}_{h}\in\WH{\calE}_{h}}\int_{\WH{e}_h}[u_h] \{\nabla_{\Gamma_h^k} v_h; n_h\} \dshk =
\sum_{\WH{K}_{h}\in\WH{\calT}_h}\int_{\WH{K}_h}r_h([u_h])\cdot \nabla_{\Gamma_h^k} v_h \dAhk $$
and
\begin{equation}\label{relationrhre}
\norm{r_h(\phi)}_{L^2(\Gamma_{h}^{k})}^2=\norm{\sum_{\WH{e}_{h}\in\WH{\calE}_h}r_{\WH{e}_h}(\phi)}_{L^2({\Gamma_{h}^{k}})}^2\lesssim \sum_{\WH{e}_{h}\in\WH{\calE}_h}\norm{r_{\WH{e}_h}(\phi)}_{L^2({\Gamma_{h}^{k}})}^2.
\end{equation}
Hence, applying the Cauchy-Schwarz inequality, we obtain
\begin{equation}\label{eq:EstimateMixTerm_b}
\begin{aligned}
\sum_{\WH{K}_{h}\in\WH{\calT}_h}\norm{r_h([u_h])}_{L^2(\WH{K}_{h})}\norm{\nabla_{\Gamma_h^k} v_h}_{L^2(\WH{K}_{h})}\lesssim&
 \sum_{\WH{K}_{h}\in\WH{\calT}_h} \alpha^{-\half} \normast{u_h}\norm{\nabla_{\Gamma_h^k} v_h}_{L^2(\WH{K}_{h})}\\
\lesssim & \alpha^{-\half} \normast{u_h} \normh{v_h},
\end{aligned}
\end{equation}
where the hidden constant depends on the degree of the polynomial approximation but not on the penalty parameters $\eta_{\WH{e}_{h}}$. 
For the surface LDG method, using Lemma \ref{re&leToAverage}, Lemma \ref{traceInequalityGammahk} and the $L^\infty(\Gamma_h^k)$ bound on $\beta$, we obtain
$$\left|\int_{\WH{e}_{h}} [\nabla_{\Gamma_{h}^{k}}u_h;n_{h}]\beta\cdot n_{h}^+[v_h]\dshk\right| \lesssim \alpha^{-\half}\|\beta\|_{L^\infty(\Gamma_h^k)} \norm{\nabla_{\Gamma_{h}^{k}}u_h}_{L^2(\WH{K}_{h})} \normast{v_h},$$
$$\left|\int_{\WH{K}_{h}} r_h([u_h]) \cdot l_h(\beta\cdot n_{h}^+[u_h])\dshk\right| \lesssim \alpha^{-1}\|\beta\|_{L^\infty(\Gamma_h^k)} \normast{u_h} \normast{v_h},$$
and, in a similar way, the remaining quantities.
Continuity then follows from the Cauchy-Schwarz inequality and the above estimates.\\

We next show coercivity of the DG bilinear forms. For the surface NIPG method, stability follows straightforwardly from the Cauchy-Schwarz inequality. For the surface LDG method, we have that
\begin{align*}
\ADG(u_h,u_h)\geq& \normh{u_h}^2-2\sum_{\WH{e}_{h}\in\WH{\calE}_h^k}\int_{\WH{e}_{h}}\left|[u_h]\{\nabla_{\Gamma_h^k} u_h;n_h\}\right|\dshk\\ &-2\norm{\beta}_{L^\infty(\Gamma_h^k)}\sum_{\WH{e}_{h}\in\WH{\calE}_h^k}\int_{\WH{e}_{h}}\left|[u_h][\nabla_{\Gamma_h^k} u_h;n_h]\right|\dshk+\normast{u_h}^2.
\end{align*} 
For the other methods involving $S_{h}(\cdot, \cdot)$ defined as in (\ref{S1}) we obtain
\begin{align*}
\ADG(u_h,u_h)\geq& \normh{u_h}^2-2\sum_{\WH{e}_{h}\in\WH{\calE}_h^k}\int_{\WH{e}_{h}}\left|[u_h]\{\nabla_{\Gamma_h^k} u_h;n_h\}\right|\dshk+\normast{u_h}^2,
\end{align*}
otherwise, if $S_{h}(\cdot, \cdot)$ is given as in (\ref{S2}), we have that
\begin{align*}
\ADG(u_h,u_h)\geq& \normh{u_h}^2-2\sum_{\WH{K}_{h}\in\WH{\calT}_h^k}\int_{\WH{K}_{h}}\left|r_h([u_h]) \cdot \nabla_{\Gamma_h^k} u_h\right|\dAhk+\normast{u_h}^2.
\end{align*}
The result follows by making use of the corresponding boundedness estimates, using using Cauchy-Schwarz inequality and Young's inequalities and choosing the penalty parameter $\alpha$ sufficiently large.
\end{proof}

We now define the DG norm for functions in $\WH{S}_{hk}^{\ell}$ as follows:
\begin{equation}\label{def:DGNormLift}
\begin{aligned}
\normDG{u_h^{\ell}}^2=\normh{u_h^{\ell}}^2+\normast{u_h^{\ell}}^2
&& \forall u_h^{\ell}\in \WH{S}_{hk}^{\ell},
\end{aligned}
\end{equation}
with
\begin{equation*}
\norm{u_h^{\ell}}_{1,h}^2 = \sum_{\WH{K}_{h}^{\ell}\in\WH{\calT}_h^{\ell}}\norm{u_h^{\ell}}^2_{H^1(\WH{K}_{h}^{\ell})}, 
\end{equation*}
and
\begin{equation*}
\normast{u_h^{\ell}}^2 = S_{h}^{\ell}(u_{h}^{\ell},u_{h}^{\ell}),
\end{equation*}
where $S_{h}^{\ell}(\cdot, \cdot)$ is defined according in (\ref{S1})-(\ref{S2}) but on $\Gamma$, i.e.,
\begin{subnumcases}{S_{h}^{\ell}(u_{h}^{\ell},v_{h}^{\ell})=}
\sum_{\WH{e}_{h}\in\WH{\calE}_h}\beta_{\WH{e}_{h}} \int_{\WH{e}_{h}^{\ell}}\delta_{\WH{e}_{h}}^{-1}[u_h^{\ell}][v_h^{\ell}] \ds , & \label{S3}\\
\sum_{\WH{e}_{h}\in\WH{\calE}_h}\eta_{\WH{e}_{h}} \int_{\Gamma} \delta_{h}^{-1} \big(r_{\WH{e}_{h}}([u_h])\big)^{\ell} \cdot \big(r_{\WH{e}_{h}}([v_h])\big)^{\ell}\dA , & \label{S4}
\end{subnumcases}
for $u_{h}^{\ell},\ v_{h}^{\ell}\in \WH{S}_{hk}^{\ell}$.
\begin{lemma}
Let $u_{h} \in \WH{S}_{hk}$ satisfy (\ref{eq:stabilityEstimateGamma1}). Then $u_{h}^{\ell} \in \WH{S}_{hk}^{\ell}$ satisfies
\begin{equation}\label{eq:stabilityEstimateGamma2}
\norm{u_{h}^{\ell}}_{DG} \lesssim  \norm{f}_{L^{2}(\Gamma)},
\end{equation}
for $h$ small enough.
\end{lemma}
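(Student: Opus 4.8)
The plan is to establish the chain
\[
\normDG{u_h^\ell} \lesssim \normDG{u_h} \lesssim \norm{f_h}_{L^2(\Gamma_h^k)} \lesssim \norm{f}_{L^2(\Gamma)},
\]
in which the middle inequality is precisely the hypothesis (\ref{eq:stabilityEstimateGamma1}). The whole task therefore reduces to two norm comparisons: that the lifted DG norm on $\Gamma$ is controlled by the DG norm on $\Gamma_h^k$, and that the discrete right-hand side is controlled by $f$ on $\Gamma$. I would treat the two constituents of the DG norm, namely the broken $H^1$ part $\normh{\cdot}$ and the stabilization part $\normast{\cdot}$, separately, since they transfer to $\Gamma$ by slightly different mechanisms.

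For the $H^1$ part I would argue element by element and invoke Lemma \ref{Lemma4_2}: the $L^2$ equivalence (\ref{normEquiv}) gives $\norm{u_h^\ell}_{L^2(\WH{K}_h^\ell)} \sim \norm{u_h}_{L^2(\WH{K}_h)}$ and the gradient equivalence (\ref{semiNormEquiv}) gives $\norm{\nabla_{\Gamma} u_h^\ell}_{L^2(\WH{K}_h^\ell)} \sim \norm{\nabla_{\Gamma_h^k} u_h}_{L^2(\WH{K}_h)}$. Squaring, adding, and summing over all $\WH{K}_h \in \WH{\calT}_h$ then yields $\normh{u_h^\ell} \sim \normh{u_h}$.

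For the stabilization part I would exploit the fact that $S_h^\ell(\cdot,\cdot)$ was defined in (\ref{S3})--(\ref{S4}) precisely so as to coincide with $S_h(\cdot,\cdot)$ after the change of variables between $\Gamma$ and $\Gamma_h^k$. For the penalty stabilization I would use that the jump is preserved under lifting, $[u_h^\ell]=[u_h]$ along $\WH{e}_h^\ell=\pi(\WH{e}_h)$, together with $\ds=\delta_{\WH{e}_h}\dshk$, so that the weight $\delta_{\WH{e}_h}^{-1}$ in (\ref{S3}) cancels exactly and $\int_{\WH{e}_h^\ell}\delta_{\WH{e}_h}^{-1}[u_h^\ell]^2\ds=\int_{\WH{e}_h}[u_h]^2\dshk$. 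Analogously, for the lifting-operator stabilization I would use $\dA=\delta_h\dAhk$ and the pointwise commutation of the lift with the Euclidean inner product, so that the $\delta_h^{-1}$ weight in (\ref{S4}) cancels against the area element and $\int_{\Gamma}\delta_h^{-1}(r_{\WH{e}_h}([u_h]))^\ell\cdot(r_{\WH{e}_h}([u_h]))^\ell\dA=\int_{\Gamma_h^k}r_{\WH{e}_h}([u_h])\cdot r_{\WH{e}_h}([u_h])\dAhk$. In both cases this gives the exact identity $\normast{u_h^\ell}=\normast{u_h}$, and combining with the $H^1$ comparison delivers $\normDG{u_h^\ell}\lesssim\normDG{u_h}$.

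Finally, for the right-hand side I would use $f_h^\ell=f$ together with $\dA=\delta_h\dAhk$ to write $\norm{f}_{L^2(\Gamma)}^2=\int_{\Gamma_h^k}f_h^2\,\delta_h\dAhk$, and then invoke (\ref{2rel}), which gives $\delta_h\sim 1$ for $h$ small, to conclude $\norm{f_h}_{L^2(\Gamma_h^k)}\sim\norm{f}_{L^2(\Gamma)}$. Chaining the three comparisons with (\ref{eq:stabilityEstimateGamma1}) yields (\ref{eq:stabilityEstimateGamma2}). I expect the only genuinely delicate point to be the bookkeeping in the stabilization comparison, in particular verifying that the lift of the vector field $r_{\WH{e}_h}([u_h])$ pairs correctly under the Euclidean dot product and that the area and edge Jacobians $\delta_h$, $\delta_{\WH{e}_h}$ are matched with the weights built into (\ref{S3})--(\ref{S4}); everything else is a direct application of Lemma \ref{Lemma4_2} and the estimate (\ref{2rel}).
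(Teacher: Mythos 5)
Your proposal is correct and follows essentially the same route as the paper: the lifted DG norm is compared to the discrete one by treating the broken $H^1$ part via the norm equivalences of Lemma \ref{Lemma4_2} and the stabilization part via the exact change-of-variables identities built into (\ref{S3})--(\ref{S4}), and the right-hand side is transferred using $f_h^{\ell}=f$ and $\delta_h\sim 1$. You in fact spell out details the paper delegates to citations (and your justification of $\norm{f_h}_{L^2(\Gamma_h^k)}\lesssim\norm{f}_{L^2(\Gamma)}$ via (\ref{2rel}) is the intended argument, the paper's reference to Lemma \ref{re&leToAverage} there notwithstanding).
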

\begin{proof}
We first show that for any function $v_h\in \WH{S}_{hk}$, for sufficiently small $h$,
\begin{align}\label{eq:normestimate}
\norm{v_h^{\ell}}_{DG} \lesssim \normDG{v_h}.
\end{align}
The $\normh{\cdot}^2$ component of the DG norm is dealt with in exactly the same way as in \cite{demlow2009higher}. For the $\normast{\cdot}^2$ component of the DG norm we have that
 

\begin{align*} 
\int_{\WH{e}_{h}} [v_h]^{2} \dshk = \int_{\WH{e}_{h}^{\ell}} \delta_{\WH{e}_{h}}^{-1} [v_h^{\ell}]^{2} \ds\quad\text{and}\quad \int_{\Gamma_h^k} |r_{h}([v_h])|^2 \dAhk = \int_{\Gamma} \delta_{h}^{-1} |r_{h}([v_h])^{\ell}|^2 \dA,
\end{align*}
which straightforwardly yields (\ref{eq:normestimate}). Making use of the discrete stability estimate (\ref{eq:stabilityEstimateGamma1}) and noting that, by Lemma \ref{re&leToAverage}, $\norm{f_{h}}_{L^2(\Gamma_h^k)} \lesssim \norm{f_{h}^{\ell}}_{L^2(\Gamma)} = \norm{f}_{L^2(\Gamma)}$, we get the desired result. 
\end{proof}


For each of the surface DG bilinear forms given in Table \ref{tab:DGMethods}, we define a corresponding bilinear form on $\Gamma$ induced by the surface lifted triangulation $\WH{\cal{T}}_{h}^{\ell}$ which is well defined for functions $w,v \in H^2(\Gamma)+\WH{S}_{hk}^{\ell}$. For the surface IP bilinear form (\ref{eq:InteriorPenaltyGammahForm}), we define 
\begin{align}\label{eq:InteriorPenaltyGammaForm}
\AC(w,v) & =  \sum_{\WH{K}_{h}^{\ell} \in \WH{\calT}_{h}^{\ell}}\int_{\WH{K}_{h}^{\ell}}\nabla_{\Gamma}w\cdot \nabla_{\Gamma}v+ w v \dA - \sum_{\WH{e}_{h}^{\ell} \in \WH{\calE}_{h}^{\ell}}\int_{\WH{e}_{h}^{\ell}}[w]\{\nabla_{\Gamma}v; n \} + [v]\{\nabla_{\Gamma}w; n \} \ds \notag \\
&+ \sum_{\WH{e}_{h}^{\ell} \in \WH{\calE}_{h}^{\ell}}\int_{\WH{e}_{h}^{\ell}}\delta_{\WH{e}_{h}}^{-1}\beta_{\WH{e}_{h}}[w][v] \ds,
\end{align}
where $n^{+}$ and $n^{-}$ are respectively the unit surface conormals to $\WH{K}_{h}^{\ell+}$ and $\WH{K}_{h}^{\ell-}$ on $\WH{e}_{h}^{\ell} \in \WH{\calE}_{h}^{\ell}$. For the Brezzi et al. bilinear form (\ref{eq:BrezziEtAlGammahForm}), we define
\begin{align}\label{eq:BrezziEtAlGammaForm}
\AC(w,v) & =  \sum_{\WH{K}_{h}^{\ell} \in \WH{\calT}_{h}^{\ell}}\int_{\WH{K}_{h}^{\ell}}\nabla_{\Gamma}w\cdot \nabla_{\Gamma}v+ w v \dA\notag \\ 
&+ \sum_{\WH{K}_{h}^{\ell} \in \WH{\calT}_{h}^{\ell}}\int_{\WH{K}_{h}^{\ell}} \delta_{h}^{-1}\eta_{\WH{e}_{h}} r_{\WH{e}_{h}}([w^{-\ell}])^{\ell}\cdot r_{\WH{e}_{h}}([v^{-\ell}])^{\ell} + \delta_{h}^{-1}\big(r_h([w^{-\ell}])\big)^{\ell}\cdot \big(r_h([v^{-\ell}])\big)^{\ell} \dA\notag \\
&- \sum_{\WH{e}_{h}^{\ell} \in \WH{\calE}_{h}^{\ell}}\int_{\WH{e}_{h}^{\ell}}[w]\{\nabla_{\Gamma}v; n \} + [v]\{\nabla_{\Gamma}w; n \}\ - \delta_{\WH{e}_{h}}^{-1}\beta_{\WH{e}_{h}}[w][v] \ds.
\end{align}
For the surface LDG bilinear form (\ref{eq:LDGGammahForm}), we define
\begin{align}\label{eq:LDGGammaForm}
&\AC(w,v)=\sum_{\WH{K}_{h}^{\ell}\in\WH{\calT}_{h}^{\ell}} \int_{\WH{K}_{h}^{\ell}} \nabla_{\Gamma}w \cdot \nabla_{\Gamma}v + w v \dA - \sum_{\WH{e}_{h}^{\ell}\in\WH{\calE}_{h}^{\ell}} \int_{\WH{e}_{h}^{\ell}} [w]\{\nabla_{\Gamma}v;n\}-\{\nabla_{\Gamma}w;n\}[v] \ds \notag \\
&+\sum_{\WH{e}_{h}^{\ell}\in\WH{\calE}_{h}^{\ell}} \int_{\WH{e}_{h}^{\ell}} \bigg(-\delta_{\WH{e}_{h}}^{-1}[\nabla_{\Gamma}w;n]\beta\cdot n_{h}^{\ell +}[v] - \delta_{\WH{e}_{h}}^{-1}\beta\cdot n_{h}^{\ell +}[w][\nabla_{\Gamma}v;n] +\delta_{\WH{e}_{h}}^{-1}\beta_{\WH{e}_{h}}[w][v] \bigg) \ds \notag \\
& +\sum_{\WH{K}_{h}^{\ell}\in\WH{\calT}_{h}^{\ell}} \int_{\WH{K}_{h}^{\ell}} \Big(r_h([w^{-\ell}]) + \beta\cdot n_{h}^{\ell+} l_h\big([w^{-\ell}]\big) \Big)^{\ell} \cdot \Big( r_h([v^{-\ell}]) + \beta \cdot n_{h}^{\ell+} l_h\big([v^{-\ell}]\big) \Big)^{\ell} \dA.
\end{align}

The corresponding bilinear forms for the other surface DG methods can be derived in a similar manner. Since we assume that the weak solution $u$ of \eqref{eq:weakH1} belongs to $H^{2}(\Gamma)$ they all satisfy
\begin{equation} \label{eq:InteriorPenaltyGamma}
\AC(u,v) = \sum_{\WH{K}_{h}^{\ell} \in \WH{\calT_{h}}^{\ell}}\int_{\WH{K}_{h}^{\ell}}f v \dA,\ \ \ \ \ \forall v \in H^{2}(\Gamma) + \WH{S}^{\ell}_{hk}.
\end{equation} 

Finally, we require the following stability estimate for $\AC(\cdot, \cdot)$, which follows by applying similar arguments as those found in the proof of Lemma \ref{Lemma:boundStabGammakIP}.
\begin{lemma}\label{Lemma:boundStabGammaIP}
The bilinear forms $\AC(\cdot, \cdot)$ induced by the surface DG methods given in Table \ref{tab:DGMethods} are coercive in the DG norm \eqref{def:DGNormLift}, i.e.,
\begin{equation}\label{eq:stabilityGammaIP}
\norm{w_{h}^{\ell}}_{DG}^{2} \lesssim \AC(w_{h}^{\ell},w_{h}^{\ell})
\end{equation}
for all $w_{h}^{\ell} \in \WH{S}_{hk}^{\ell}$ if, for the surface IP, Bassi et al. and IIPG methods, the penalty parameter $\alpha$ appearing in the definition of $\beta_{\WH{e}_{h}}$ or $\eta_{\WH{e}_{h}}$ in (\ref{penalityParameters}) is chosen sufficiently large.
\end{lemma}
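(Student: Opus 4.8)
The plan is to transfer the coercivity result of Lemma \ref{Lemma:boundStabGammakIP} from the discrete surface $\Gamma_h^k$ to the smooth surface $\Gamma$ by exploiting the near-isometry between the two triangulations encoded in the geometric estimates of Lemma \ref{Gamma2GammahSmall}. Concretely, I would fix $w_h^\ell \in \WH{S}_{hk}^\ell$ and let $w_h = (w_h^\ell)^{-\ell} \in \WH{S}_{hk}$ be its inverse lift, so that the already-established coercivity $\ADG(w_h,w_h) \gtrsim \normDG{w_h}^2$ is available on the discrete side. The two remaining tasks are then (i) to compare $\AC(w_h^\ell, w_h^\ell)$ with $\ADG(w_h,w_h)$ term by term, and (ii) to compare $\normDG{w_h^\ell}$ with $\normDG{w_h}$. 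For the norm comparison, the identities displayed just before \eqref{eq:normestimate} (the change-of-variables formulas $\int_{\WH{e}_h}[v_h]^2 \dshk = \int_{\WH{e}_h^\ell}\delta_{\WH{e}_h}^{-1}[v_h^\ell]^2 \ds$ and the analogous area identity for $r_h$) together with \eqref{semiNormEquiv} and \eqref{normEquiv} already give $\normDG{w_h^\ell} \sim \normDG{w_h}$, so this direction is essentially free.

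The core of the argument is the form comparison. I would rewrite each term of the lifted bilinear form, for instance \eqref{eq:InteriorPenaltyGammaForm}, using the lifting identity \eqref{eq:liftingOfStiffnessTerm} for the volume stiffness-plus-mass contribution and the edge change-of-variables $\delta_{\WH{e}_h}\dshk = \ds$ for the face integrals. This expresses $\AC(w_h^\ell, w_h^\ell)$ as the corresponding discrete integral $\ADG(w_h,w_h)$ plus error terms each of which carries a geometric defect factor: $P - R_h$ and $1-\delta_h^{-1}$ in the volume terms (controlled by \eqref{4rel} and \eqref{2rel}), $1-\delta_{\WH{e}_h}^{-1}$ on the penalty term (by \eqref{5rel}), and $n - P n_h$ together with $P - R_{\WH{e}_h}$ in the consistency/flux terms (by \eqref{7rel} and \eqref{6rel}). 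Each defect is $O(h^{k+1})$ or, in the worst case involving $\nu - \nu_h$ on a single conormal, $O(h^k)$, and I would bound the accompanying products of gradients and jumps by $\normDG{w_h}^2$ using Cauchy--Schwarz, the trace inequality of Lemma \ref{traceInequalityGammahk}, and Lemma \ref{re&leToAverage} exactly as in the proof of Lemma \ref{Lemma:boundStabGammakIP}. This yields
\begin{equation*}
\AC(w_h^\ell, w_h^\ell) \geq \ADG(w_h, w_h) - C h^{\gamma}\normDG{w_h}^2,
\end{equation*}
with $\gamma \geq k \geq 1$, so that for $h$ small enough the geometric perturbation is absorbed into the discrete coercivity constant, giving $\AC(w_h^\ell, w_h^\ell) \gtrsim \normDG{w_h}^2 \sim \normDG{w_h^\ell}^2$ as required.

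The main obstacle I anticipate is the careful bookkeeping of the flux and consistency terms, where the surface conormals enter. Because $n_h^+ \neq -n_h^-$ in general on the discrete surface, the averages $\{\nabla_{\Gamma_h^k} w_h; n_h\}$ and jumps $[\nabla_{\Gamma_h^k} w_h; n_h]$ do not lift cleanly to their smooth counterparts $\{\nabla_\Gamma w_h^\ell; n\}$ and $[\nabla_\Gamma w_h^\ell; n]$; one must insert $R_{\WH{e}_h}$ via the relation $\nabla_{\Gamma_h^k}v_h = P_h(I-dH)P\nabla_\Gamma v_h^\ell$ and pair the conormal differences against the traces, then invoke \eqref{6rel} and \eqref{7rel}. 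This is precisely where the weaker $O(h^k)$ estimate \eqref{3rel} could in principle degrade the bound, so I would be careful to route the conormal comparison through the $O(h^{k+1})$ estimate \eqref{7rel} for $n - P n_h$ rather than through $\nu - \nu_h$ directly, ensuring the perturbation remains higher order. For the LDG form \eqref{eq:LDGGammaForm} the same strategy applies, with the additional $\beta \cdot n_h^+$ weighted terms handled as in Lemma \ref{Lemma:boundStabGammakIP} using the $L^\infty$ bound on $\beta$; the Brezzi et al. form \eqref{eq:BrezziEtAlGammaForm} is actually simpler since its stabilization is naturally positive and no large penalty parameter beyond that already required on $\Gamma_h^k$ is needed.
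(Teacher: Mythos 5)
Your argument is correct in outline, but it takes a genuinely different route from the paper. The paper disposes of this lemma by repeating the coercivity argument of Lemma \ref{Lemma:boundStabGammakIP} \emph{directly on the lifted triangulation}: the $\delta_{\WH{e}_{h}}^{-1}$ and $\delta_h^{-1}$ weights built into the definitions (\ref{S3})--(\ref{S4}) make the stabilization terms exact lifts of their discrete counterparts, and the norm equivalences of Lemma \ref{Lemma4_2} transport the trace and inverse inequalities to $\Gamma$, so the same Cauchy--Schwarz/Young/large-$\alpha$ chain closes without ever comparing $\AC$ to $\ADG$. You instead run a perturbation argument, writing $\AC(w_h^{\ell},w_h^{\ell}) \geq \ADG(w_h,w_h) - Ch^{\gamma}\normDG{w_h}^2$ and absorbing the geometric defect for small $h$; this is essentially the diagonal case of the computation in Lemma \ref{PerturbedGalerkinOrthogonality}, so it reuses work the paper does anyway, and your care in routing the conormal comparison through (\ref{7rel}) rather than (\ref{3rel}) is sound (though for coercivity even an $O(h)$ defect would suffice). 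What your route costs: the smallness of $h$ becomes entangled with the coercivity constant, and, more substantively, for the methods stabilized by (\ref{S2}) your perturbed consistency terms $\int_{\WH{e}_{h}}[w_h]\{\epsilon\,\nabla_{\Gamma_h^k}v_h;n_h\}\dshk$ cannot be converted to volume integrals via the lifting operator (since $\epsilon\,\nabla_{\Gamma_h^k}v_h \notin \WH{\Sigma}_{hk}$ in general), so you must bound $\|[w_h]\|_{L^2(\WH{e}_{h})}$ by the stabilization seminorm $\normast{w_h}$. That requires the \emph{converse} of Lemma \ref{re&leToAverage}, namely $\beta_{\WH{e}_{h}}\|[v_h]\|^2_{L^2(\WH{e}_{h})} \lesssim \alpha\|r_{\WH{e}_{h}}([v_h])\|^2_{L^2(\Gamma_h^k)}$ for discrete $v_h$ --- a standard fact, but one the paper neither states nor needs, since its direct proof sidesteps the jump norm entirely for those methods. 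You should state and justify that equivalence if you keep the perturbation route.
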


\section{Convergence}
\label{sec:Convergence}
We now state the main result of this paper.
\begin{theorem}\label{aprioriErrorEstimateIP}
Let $u \in H^{k+1}(\Gamma)$ and $u_{h} \in \WH{S}_{hk}$ denote the solutions to (\ref{eq:weakH1}) and (\ref{eq:PrimalFormulation}), respectively. Let $\eta=0$ for IIPG, NIPG formulations and let $\eta=1$ otherwise. Then,
\[ \norm{u-u_{h}^{\ell}}_{L^{2}(\Gamma)} + h^{\eta}\norm{u-u_{h}^{\ell}}_{DG} \lesssim h^{k+\eta}(\norm{f}_{L^{2}(\Gamma)}+\norm{u}_{H^{k+1}(\Gamma)}),\]
provided the mesh size $h$ is small enough and the penalty parameter $\alpha$ is large enough for the surface IP, Bassi et al. and IIPG methods.
\end{theorem}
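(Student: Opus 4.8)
The plan is to follow the standard Strang-type paradigm for nonconforming/DG error analysis, adapted to the surface setting via the lifting machinery assembled in Section~4. The exact solution $u$ lives on $\Gamma$ and the discrete solution $u_h$ on $\Gamma_h^k$; the key trick is to compare everything on $\Gamma$ by working with $u_h^\ell \in \WH{S}_{hk}^\ell$ and the lifted bilinear form $\AC(\cdot,\cdot)$. First I would split the error as $u - u_h^\ell = (u - I_h u) + (I_h u - u_h^\ell)$, where $I_h u \in \WH{S}_{hk}^\ell$ is a suitable interpolant (the lift of the Lagrange interpolant of $u$ on $\Gamma_h^k$). The first piece is controlled by standard interpolation estimates: by Lemma~\ref{Lemma4_2} and the approximation properties of degree-$k$ polynomials, $\norm{u - I_h u}_{DG} \lesssim h^k \norm{u}_{H^{k+1}(\Gamma)}$ (and one order better in $L^2$). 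The remaining piece $\xi_h := I_h u - u_h^\ell$ is a lifted discrete function, so its DG norm can be bounded using the coercivity estimate \eqref{eq:stabilityGammaIP} from Lemma~\ref{Lemma:boundStabGammaIP}.

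\textbf{The core estimate.} Applying coercivity gives $\norm{\xi_h}_{DG}^2 \lesssim \AC(\xi_h, \xi_h)$, and I would rewrite $\AC(\xi_h,\xi_h) = \AC(I_h u - u, \xi_h) + \AC(u - u_h^\ell, \xi_h)$. The first term is handled by the continuity of $\AC(\cdot,\cdot)$ (proved analogously to Lemma~\ref{Lemma:boundStabGammakIP}) together with the interpolation bound above. The second term is where consistency enters: by the Galerkin property \eqref{eq:InteriorPenaltyGamma} we have $\AC(u,\xi_h) = \sum_{\WH{K}_h^\ell} \int_{\WH{K}_h^\ell} f\,\xi_h\,\dA$, while the discrete equation \eqref{eq:PrimalFormulation} reads $\ADG(u_h, \xi_h^{-\ell}) = \sum_{\WH{K}_h} \int_{\WH{K}_h} f_h\,\xi_h^{-\ell}\,\dAhk$. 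The crux is therefore to control the \emph{geometric inconsistency} $\AC(u_h^\ell, \xi_h) - \ADG(u_h, \xi_h^{-\ell})$, i.e., the mismatch between the bilinear form evaluated on $\Gamma$ versus its unlifted counterpart on $\Gamma_h^k$, together with the discrepancy between $\int f\,\xi_h$ and $\int f_h\,\xi_h^{-\ell}$.

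\textbf{The main obstacle} is precisely this geometric consistency term. Here the heavy lifting is done by the geometric estimates of Lemma~\ref{Gamma2GammahSmall}: each volume and edge integrand in $\AC - \ADG$ differs from its discrete analogue by factors involving $P - R_h$, $1-\delta_h$, $1-\delta_{\WH{e}_h}$, $P - R_{\WH{e}_h}$, and the conormal mismatch $n^{\pm} - P n_h^{\pm}$, all of which are $O(h^{k+1})$ (the normal term $\nu - \nu_h$ is only $O(h^k)$, but it appears contracted against tangential quantities so that the effective order is again $k+1$). Combining these, the geometric defect is bounded by $h^{k+1}\norm{u}_{H^{k+1}(\Gamma)}\norm{\xi_h}_{DG}$, which absorbs cleanly. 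Collecting all contributions yields $\norm{\xi_h}_{DG} \lesssim h^k(\norm{f}_{L^2(\Gamma)} + \norm{u}_{H^{k+1}(\Gamma)})$, and the triangle inequality gives the energy estimate. For the $L^2$ estimate one extra power of $h$ is gained by a standard Aubin--Nitsche duality argument: introduce the dual problem $\AC(v, z) = (u - u_h^\ell, v)_{L^2(\Gamma)}$, test with $v = u - u_h^\ell$, use $H^2$-regularity of $z$ together with the energy estimate and the geometric consistency bounds once more. The duality step must be carried out on $\Gamma$ using the lifted forms so that the elliptic regularity of the continuous problem is available, and the adjoint consistency of the chosen flux (symmetric for IP/Brezzi, handled separately for NIPG/IIPG, which is exactly why those cases carry $\eta = 0$ and lose the extra power) governs whether the optimal $h^{k+1}$ rate is attained.
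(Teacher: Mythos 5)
Your proposal is correct and follows essentially the same route as the paper: the same splitting via the lifted interpolant, coercivity of $\AC(\cdot,\cdot)$ applied to $I_h^k u - u_h^\ell$, the identification of the geometric inconsistency term (the paper's functional $E_h$ in Lemma \ref{PerturbedGalerkinOrthogonality}, controlled by the $O(h^{k+1})$ estimates of Lemma \ref{Gamma2GammahSmall}), and an Aubin--Nitsche duality argument on $\Gamma$ for the $L^2$ bound. The only cosmetic difference is that the paper bounds $|E_h(v_h^\ell)|$ by $h^{k+1}\norm{f}_{L^2(\Gamma)}\norm{v_h^\ell}_{DG}$ using the discrete stability estimate, whereas you phrase the defect in terms of $\norm{u}_{H^{k+1}(\Gamma)}$; both land in the stated right-hand side.
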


The proof will follow an argument similar to the one outlined in \cite{arnold2002unified}. Using the stability result (\ref{eq:stabilityGammaIP}), we have that
\begin{equation} \label{eq:first_sec4}
\norm{\phi_{h}^{\ell}- u_{h}^{\ell}}_{DG}^{2} \lesssim
  \AC(\phi_{h}^{\ell}-u_{h}^{\ell},\phi_{h}^{\ell}- u_{h}^{\ell}) = \AC(u-u_{h}^{\ell},\phi_{h}^{\ell}-u_{h}^{\ell}) +  \AC(\phi_{h}^{\ell}-u,\phi_{h}^{\ell}-u_{h}^{\ell}),
\end{equation}
where $\phi_{h}^{\ell} \in \WH{S}^{\ell}_{hk}$. Since we do not directly have Galerkin orthogonality the first term on the right-hand side of (\ref{eq:first_sec4}) is not zero and its estimation will be the main part of this section.
%
The second term is dealt with in the following way:
following \cite{demlow2009higher}, for $\widehat{w}\in H^2(\Gamma_{h}^{k})$, we define the interpolant $\WH{I}_h^k : C^0(\Gamma_h^k)\rightarrow \WH{S}_{hk}$ by $$\WH{I}_{h}^{k}\widehat{w}=\WT{I}_{h}^{k}(\WH{w}\circ\pi_k),$$
where $\WT{I}_h^k : C^0(\Gamma_h)\rightarrow \WT{S}_{hk}$ is the standard Lagrange interpolant of degree $k$. 
We also define the interpolant $I_h^k : C^0(\Gamma)\rightarrow \WH{S}_{hk}^{\ell}$ by 
$$I_h^k w  = \WH{I}_h^k (w \circ \pi ).$$

\begin{lemma}\label{interpolationEstimate}
Let $w \in H^{m}(\Gamma)$ with $2\leq m \leq k+1$. Then for $i=0, 1,$
\[|w - I_h^k w|_{H^i(\WH{K}_{h}^{\ell})} \lesssim h^{m-i}\norm{w}_{H^m(\WH{K}_{h}^{\ell})}. \]
\end{lemma}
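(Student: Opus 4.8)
The plan is to reduce the estimate on the curved triangle $\WH{K}_h^\ell\subset\Gamma$ to the classical polynomial interpolation estimate on the flat triangle $\WT{K}_h\subset\Gamma_h$, routing everything through the intermediate surface $\Gamma_h^k$ and letting the norm equivalences of Lemma \ref{Lemma4_2} carry out the bookkeeping; this parallels the argument of \cite{demlow2009higher}. First I would unwind the definition of $I_h^k$. Writing $\WH{w}=w\circ\pi\in C^0(\Gamma_h^k)$ and $\WT{w}=\WH{w}\circ\pi_k=w\circ\pi\circ\pi_k\in C^0(\Gamma_h)$, the inverse lift of $w-I_h^k w$ is $v:=\WH{w}-\WH{I}_h^k\WH{w}$ on $\Gamma_h^k$, whose own lift is $v^\ell=w-I_h^k w$. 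By the very definition $\WH{I}_h^k\WH{w}=\WT{I}_h^k(\WH{w}\circ\pi_k)$ one has $v\circ\pi_k=\WT{w}-\WT{I}_h^k\WT{w}$, i.e.\ precisely the standard degree-$k$ Lagrange interpolation error of $\WT{w}$ on $\WT{K}_h$. Applying \eqref{normEquiv} (for $i=0$) and \eqref{semiNormEquiv} (for $i=1$) of Lemma \ref{Lemma4_2} to $v$ then yields
\[ |w-I_h^k w|_{H^i(\WH{K}_h^\ell)} \lesssim |\WT{w}-\WT{I}_h^k\WT{w}|_{H^i(\WT{K}_h)}, \qquad i=0,1. \]

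Next I would invoke flat interpolation theory. Since $2\le m\le k+1$ and $\WT{K}_h$ is two-dimensional, the Sobolev embedding $H^m(\WT{K}_h)\hookrightarrow C^0(\WT{K}_h)$ makes the elementwise Lagrange interpolant $\WT{I}_h^k$ well defined, and the classical Bramble--Hilbert/scaling argument on the shape-regular flat triangle (see, e.g., \cite{Ciarlet2002}) gives
\[ |\WT{w}-\WT{I}_h^k\WT{w}|_{H^i(\WT{K}_h)} \lesssim h^{m-i}\,|\WT{w}|_{H^m(\WT{K}_h)}, \qquad i=0,1, \]
which supplies the required power $h^{m-i}$.

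Finally I would transfer the $H^m$ seminorm back to $\Gamma$. Using \eqref{derEstimGammah} with $\WT{v}=\WT{w}$ (so that $v=\WH{w}$), followed by \eqref{derEstimGammahk} with $v^\ell=w$,
\[ |\WT{w}|_{H^m(\WT{K}_h)} \lesssim \sum_{1\le j\le m}\norm{D^j_{\Gamma_h^k}\WH{w}}_{L^2(\WH{K}_h)} \lesssim \sum_{1\le j\le m}\sum_{1\le l\le j}\norm{D^l_{\Gamma}w}_{L^2(\WH{K}_h^\ell)} \lesssim \norm{w}_{H^m(\WH{K}_h^\ell)}. \]
Chaining the three displays gives the claimed bound.

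The only genuinely delicate step is this last transfer of higher-order derivatives: under the nonlinear maps $\pi_k$ and $\pi$ a derivative of order $m$ of $\WT{w}$ expands, via repeated application of the chain rule, into sums of products of derivatives of $w$ with derivatives of the maps up to order $m$, and the constants must be uniform in $h$. This is exactly what the chain-rule estimates \eqref{derEstimGammahk}--\eqref{derEstimGammah} of Lemma \ref{Lemma4_2} encode, resting in turn on the geometric smallness relations and the mesh regularity established earlier. Everything else is standard flat interpolation theory together with the $L^2$ and $H^1$ norm equivalences, so no new work beyond invoking Lemma \ref{Lemma4_2} is required.
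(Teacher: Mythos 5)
Your argument is correct and is essentially the paper's own proof: the authors simply state that the result follows by combining standard Lagrange interpolation estimates on $\Gamma_h$ with the norm equivalences and chain-rule derivative bounds of Lemma \ref{Lemma4_2}, referring to \cite{demlow2009higher} for details, which is exactly the reduction you carry out. Your write-up just makes explicit the pull-back to $\WT{K}_h$, the flat Bramble--Hilbert estimate, and the transfer of the $H^m$ seminorm back to $\WH{K}_h^{\ell}$.
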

\begin{proof}
The proof follows easily by combining standard estimates for the Lagrange interpolant on $\Gamma_h$ with Lemma \ref{Lemma4_2}. See \cite{demlow2009higher} for further details.
\end{proof}

\begin{lemma}\label{interpolationEstimateNew}
Let $w\in H^{m}(\Gamma)$ with $2\leq m \leq k+1$. Then, for sufficiently small $h$, we have that
$$\norm{w-I_h^k w}_{L^2(\partial \WH{K}_h^{\ell})}^2+h^2\norm{\nabla_{\Gamma}(w-I_h^k w)}_{L^2(\partial \WH{K}_h^{\ell})}^2\lesssim h^{2m-1}\norm{w}^2_{H^{m}(\WH{K}_h^{\ell})}.$$
\end{lemma}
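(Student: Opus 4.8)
## Proof Proposal for Lemma \ref{interpolationEstimateNew}

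The plan is to reduce this trace-type interpolation estimate on the curved lifted element $\WH{K}_h^{\ell} \subset \Gamma$ to the corresponding estimate on the flat element $\WT{K}_h \subset \Gamma_h$, where standard scaling arguments for Lagrange interpolation apply. First I would set $\WT{w} = w \circ \pi_k \circ (\text{lift})$ so that the discrete interpolant transfers correctly through the chain of maps $\WT{K}_h \to \WH{K}_h \to \WH{K}_h^{\ell}$ used to define $\WH{I}_h^k$ and $I_h^k$. Since the boundary $\partial \WH{K}_h^{\ell}$ is the disjoint union of lifted edges $\WH{e}_h^{\ell}$, the left-hand side decomposes into edge contributions, and on each edge the area/edge deformation factors $\delta_{\WH{e}_h}$ (and its analogue $\delta_{\WT{e}_h}$ when passing further to $\Gamma_h$) are all $1 + O(h^2)$ or better by \eqref{5rel}, hence uniformly bounded above and below for $h$ small enough. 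This lets me pass the $L^2(\partial \WH{K}_h^{\ell})$ norms and the tangential-gradient norms back and forth between the three surfaces at the cost of harmless constants.

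The core estimate is the classical one on $\Gamma_h$: for the standard Lagrange interpolant $\WT{I}_h^k$ of degree $k$ on the flat simplex $\WT{K}_h$ one has the trace interpolation bound
\begin{equation*}
\norm{\WT{w} - \WT{I}_h^k \WT{w}}_{L^2(\partial \WT{K}_h)}^2 + h^2 \norm{\nabla_{\Gamma_h}(\WT{w} - \WT{I}_h^k \WT{w})}_{L^2(\partial \WT{K}_h)}^2 \lesssim h^{2m-1} \norm{\WT{w}}_{H^m(\WT{K}_h)}^2,
\end{equation*}
which follows from a scaling to the reference element $K$, the Bramble--Hilbert lemma, and the standard trace inequality on $K$ (the power $h^{2m-1}$ being the familiar loss of one power of $h$ relative to the interior $L^2$-estimate $h^{2m}$, and the second term gaining the $h^2$ prefactor because it differentiates once). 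I would invoke the trace theorem for polynomials from Lemma~3.4 in \cite{dedner2013surfaces} together with Lemma~\ref{thm:ciarletInverse} exactly as in the proof of Lemma~\ref{traceInequalityGammahk}, or simply cite \cite{demlow2009higher} for the flat interpolation estimate.

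Having the flat estimate, I would lift it to $\WH{K}_h^{\ell}$. Using the norm equivalences \eqref{normEquiv}--\eqref{semiNormEquiv} of Lemma \ref{Lemma4_2} on the interiors (and their edge analogues via the bounded factors $\delta_{\WT{e}_h \to \WH{e}_h}$ controlled as in Lemma \ref{traceInequalityGammahk}), the interpolation error $\WT{w} - \WT{I}_h^k\WT{w}$ on $\partial\WT{K}_h$ is equivalent to $(w - I_h^k w)$ on $\partial\WH{K}_h^{\ell}$, and similarly for the tangential gradients. Finally I would replace the right-hand side $\norm{\WT{w}}_{H^m(\WT{K}_h)}$ by $\norm{w}_{H^m(\WH{K}_h^{\ell})}$ using \eqref{derEstimGammahk}--\eqref{derEstimGammah} to pass Sobolev norms of order up to $m$ through the maps $\pi$ and $\pi_k$.

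The main obstacle is the careful bookkeeping of the \emph{boundary} deformation factors rather than any deep analytic difficulty: the norm-equivalence lemmas in the excerpt are stated for element interiors, so I must justify the edge-wise versions of \eqref{normEquiv}--\eqref{semiNormEquiv}, which requires the edge deformation estimate \eqref{5rel} and the comparison $\delta_{\WT{e}_h \to \WH{e}_h} = 1 + O(h^2)$ established inside the proof of Lemma \ref{traceInequalityGammahk}. The tangential-gradient term needs extra care because differentiation does not commute cleanly with the lift; here I would use the relation $\nabla_{\Gamma_h^k} v = P_h(I - dH)P\nabla_\Gamma v^{\ell}$ together with the geometric bounds \eqref{1rel}--\eqref{4rel} of Lemma \ref{Gamma2GammahSmall} to control the transformation of gradients across surfaces, and the smoothness of $\Gamma$ to bound the higher-order terms generated when the interpolation operator is composed with the nonlinear lift. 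None of these introduce powers of $h$ worse than those already accounted for, so the stated rate $h^{2m-1}$ survives.
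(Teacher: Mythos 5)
Your proposal is correct and follows essentially the same route as the paper: establish the trace interpolation estimate on the flat element $\WT{K}_h$ (via the scaled trace inequality plus the classical Lagrange interpolation bounds), then transfer it to $\WH{K}_h^{\ell}$ through $\pi_k$ and $\pi$ using the edge deformation estimate \eqref{5rel} and the norm equivalences of Lemma \ref{Lemma4_2}. The paper carries out exactly this two-stage lifting with the $(1-Ch^2)$ and $(1-Ch^{k+1})$ factors you anticipate, so no further comparison is needed.
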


\begin{proof} Fix an arbitrary element $\WH{K}_{h}^{\ell} \in \WH{\mathcal{T}}_{h}^{\ell}$. We then define $\WH{w}\in H^{m}(\WH{K}_{h})$ and ${\WT{w}\in H^{m}(\WT{K}_h)}$ such that $w=\WH{w}\circ \pi$ and $\WT{w}=\WH{w}\circ \pi_k$.

Applying the trace theorem on $\WT{K}_h\in\WT{\calT}_h$ we get
$$\int_{\partial \WT{K}_h}|\nabla_{\Gamma_h}(\WT{w}-\WT{I}_h^k \WT{w})|^2\dsh\lesssim \bigg(\frac{1}{h}\int_{\WT{K}_h}|\nabla_{\Gamma_h}(\WT{w}-\WT{I}_h^k \WT{w})|^2\dAh +h\int_{\WT{K}_h}|\nabla^2_{\Gamma_h}(\WT{w}-\WT{I}_h^k \WT{w})|^2\dAh \bigg).$$
Applying a classical interpolation result for the right-hand side, we obtain
$$\int_{\partial \WT{K}_h}|\nabla_{\Gamma_h}(\WT{w}-\WT{I}_h^k \WT{w})|^2\dsh\lesssim h^{2m-3}|\WT{w}|^2_{H^{m}(\WT{K}_h)}.$$
Then, lifting the left-hand side on $\Gamma_h^k$ as in Lemma \ref{traceInequalityGammahk} and using (\ref{semiNormEquiv}) with (\ref{derEstimGammah}) we have
$$(1-Ch^2)\int_{\partial \WH{K}_{h}}|\nabla_{\Gamma_{h}^{k}}(\WH{w}-\WH{I}_h^k \WH{w})|^2 \dshk\lesssim h^{2m-3}\norm{\WH{w}}^2_{H^{m}(\WH{K}_{h})}.$$
In the same way, we lift the left-hand side onto $\Gamma$ and use (\ref{semiNormEquiv}) with (\ref{derEstimGammah}):
$$(1-Ch^{k+1})(1-Ch^2)\norm{\nabla_{\Gamma}(w-I_h^k w)}_{L^2(\partial \WH{K}_h^{\ell})}^2\lesssim h^{2m-3}\norm{w}^2_{H^{m}(\WH{K}_h^{\ell})}.$$
Then, proceeding similarly with $\norm{w-I_h^k w}_{L^2(\partial \WH{K}_h^{\ell})}^2$, we get the desired result for $h$ small enough.
\end{proof}


These interpolation estimates allow us to derive the following estimates for $\AC(\cdot, \cdot)$: 

\begin{lemma}\label{ApproxErrorIterpLemma}
Let $u \in H^{m}(\Gamma)$ and $w \in H^{n}(\Gamma)$ with $2\leq m, n \leq k+1$. Then, for all $v_h^{\ell}\in \WH{S}_{hk}^{\ell}$, we have that
\begin{align}
\AC(u-I_h^k u, v_h^{\ell})&\lesssim  h^{m-1} \norm{u}_{H^{m}(\Gamma)} \norm{v_{h}^{\ell}}_{DG}, \label{eq:approxErrorInterp} \\
\AC(u-I_h^k u, w-I_h^k w)&\lesssim  h^{m+n-2} \norm{u}_{H^{m}(\Gamma)} \norm{w}_{H^{n}(\Gamma)}. \label{eq:approxErrorInterpBis}
\end{align}
\end{lemma}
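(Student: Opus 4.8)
The plan is to estimate each of the three groups of terms appearing in $\AC(u - I_h^k u, v_h^\ell)$ separately: the element (volume) contribution, the edge/consistency terms involving jumps and averages of normal derivatives, and the stabilization contribution. Throughout I would write $e := u - I_h^k u$ for the interpolation error and repeatedly invoke the interpolation bounds of Lemmas \ref{interpolationEstimate} and \ref{interpolationEstimateNew}, together with the geometric estimates of Lemma \ref{Gamma2GammahSmall} (in particular \eqref{5rel}) to control the deformation factors $\delta_{\WH{e}_h}^{-1}$ appearing in the lifted forms \eqref{eq:InteriorPenaltyGammaForm}--\eqref{eq:LDGGammaForm}.

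\textbf{Volume terms.} For the bulk integrals $\int_{\WH{K}_h^\ell} \nabla_\Gamma e \cdot \nabla_\Gamma v_h^\ell + e\, v_h^\ell \dA$, I would apply Cauchy--Schwarz on each element and then Lemma \ref{interpolationEstimate} with $i = 1$ (and $i=0$) to bound $\norm{e}_{H^1(\WH{K}_h^\ell)} \lesssim h^{m-1}\norm{u}_{H^m(\WH{K}_h^\ell)}$; summing over elements and using a discrete Cauchy--Schwarz over the index set against $\normh{v_h^\ell}$ gives a contribution $\lesssim h^{m-1}\norm{u}_{H^m(\Gamma)}\normh{v_h^\ell}$.

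\textbf{Edge terms.} The consistency terms have the form $\int_{\WH{e}_h^\ell} [e]\{\nabla_\Gamma v_h^\ell; n\} + [v_h^\ell]\{\nabla_\Gamma e; n\}\ds$. The key observation, since $u \in H^2(\Gamma) \subset C^0$ is single-valued across edges, is that $[e] = [u - I_h^k u] = -[I_h^k u]$ but more importantly $[u]=0$ so the jump of $e$ equals the jump of the (continuous) interpolant minus the continuous $u$, hence $[e]$ can be controlled by the trace estimate of Lemma \ref{interpolationEstimateNew}. For the term $[e]\{\nabla_\Gamma v_h^\ell; n\}$ I would Cauchy--Schwarz on each edge, use Lemma \ref{interpolationEstimateNew} to bound $\norm{[e]}_{L^2(\WH{e}_h^\ell)} \lesssim h^{m-1/2}\norm{u}_{H^m(\WH{K}_h^\ell)}$, and then absorb the factor $\{\nabla_\Gamma v_h^\ell;n\}$ via the trace inequality (the lifted analogue of Lemma \ref{traceInequalityGammahk}), which converts the edge norm of $\nabla_\Gamma v_h^\ell$ into $h^{-1/2}\normh{v_h^\ell}$; the powers of $h$ combine to give $h^{m-1}$. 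For the symmetric term $[v_h^\ell]\{\nabla_\Gamma e; n\}$ I would instead pair $\norm{[v_h^\ell]}_{L^2(\WH{e}_h^\ell)}$, which is controlled by $\normast{v_h^\ell}$ up to a factor $\beta_{\WH{e}_h}^{-1/2}\sim h^{1/2}/k$, against the trace term $\norm{\nabla_\Gamma e}_{L^2(\WH{e}_h^\ell)} \lesssim h^{m-3/2}\norm{u}_{H^m}$ from Lemma \ref{interpolationEstimateNew}; again the exponents sum to $h^{m-1}$.

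\textbf{Stabilization terms.} Here I would use that $\normast{e}$ is bounded through either the jump form \eqref{S3} or the lifting form \eqref{S4}. Using Lemma \ref{re&leToAverage} to pass between $r_{\WH{e}_h}$ and the jumps when the form \eqref{S4} is in play, and Lemma \ref{interpolationEstimateNew} for $\norm{[e]}_{L^2(\WH{e}_h^\ell)}$, one checks $\normast{e} \lesssim h^{m-1}\norm{u}_{H^m(\Gamma)}$; Cauchy--Schwarz against $\normast{v_h^\ell}$ then closes this piece. Combining the three groups and recognizing $\normh{v_h^\ell} + \normast{v_h^\ell} \lesssim \normDG{v_h^\ell}$ yields \eqref{eq:approxErrorInterp}. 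The estimate \eqref{eq:approxErrorInterpBis} follows by exactly the same decomposition but with $v_h^\ell$ replaced by $w - I_h^k w$: each term now carries interpolation-error factors on \emph{both} arguments, so the edge terms no longer need the inverse trace inequality (both factors are controlled directly by Lemma \ref{interpolationEstimateNew}), and the powers $h^{m-1}$ and $h^{n-1}$ multiply to give the stated $h^{m+n-2}$.

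\textbf{Main obstacle.} The delicate point will be the asymmetry in the edge terms for \eqref{eq:approxErrorInterp}: one factor is an interpolation error (to which Lemma \ref{interpolationEstimateNew} applies) while the other is a discrete function (to which only an inverse trace inequality applies), and I must verify that the penalty weights $\beta_{\WH{e}_h} \sim \alpha k^2 h^{-1}$ balance the half-powers of $h$ so that no negative power of $h$ survives and the constant stays independent of the mesh size. Equally, I must take care that the deformation factors $\delta_{\WH{e}_h}^{-1}$ and $\delta_h^{-1}$ hidden in \eqref{eq:InteriorPenaltyGammaForm}--\eqref{eq:LDGGammaForm}, which differ from $1$ only by $O(h^{k+1})$ by \eqref{2rel} and \eqref{5rel}, are absorbed into the generic constant rather than degrading the rate.
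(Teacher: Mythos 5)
Your proposal is correct and arrives at the right rates, but it does more work than the paper's proof because it misses (or at least does not exploit) the one observation the paper's argument is built on: since $u \in H^{m}(\Gamma) \subset C^{0}(\Gamma)$ for $m \geq 2$ \emph{and} $I_h^k u$ is a continuous (Lagrange-type) interpolant, the jump $[u - I_h^k u]$ vanishes identically on every edge, not merely up to a power of $h$. You write $[e] = -[I_h^k u]$ and then bound $\norm{[e]}_{L^2(\WH{e}_h^\ell)}$ via Lemma \ref{interpolationEstimateNew}; the paper instead sets $[e]=0$, which kills outright the edge term $[e]\{\nabla_\Gamma v_h^\ell; n\}$, the whole stabilization contribution $\normast{e}$, and --- via a Cauchy--Schwarz argument in the definition of the lifting operators --- the terms $r_{\WH{e}_h}([e])$ and $l_{\WH{e}_h}([e])$ appearing in the Bassi--Rebay, Brezzi et al.\ and LDG forms. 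What then remains is only the volume term (Lemma \ref{interpolationEstimate}) and the single edge term $[v_h^\ell]\{\nabla_\Gamma e; n\}$, which is paired against $\normast{v_h^\ell}$ exactly as you do, using Lemma \ref{interpolationEstimateNew} and the $\beta_{\WH{e}_h}^{-1/2}\sim h^{1/2}$ weight. Your route buys a little generality (it would survive a discontinuous interpolant), at the price of having to verify that the lifting-based stabilization \eqref{S4} can be passed back to edge jumps --- a direction of the equivalence that Lemma \ref{re&leToAverage} as stated does not give you --- whereas in the paper's proof those terms are simply zero. The power counting in both \eqref{eq:approxErrorInterp} and \eqref{eq:approxErrorInterpBis}, and your handling of the geometric factors $\delta_{\WH{e}_h}^{-1}$, $\delta_h^{-1}$, are otherwise consistent with the paper.
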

\begin{proof}
Since $u \in H^{m}(\Gamma) \subset C^0(\Gamma)$ for $m \geq 2$ and $I_h^k u \in C^0(\Gamma)$, we have that $[\WH{u}-\WH{I}_h^k \WH{u}]=0$ on each $\WH{e}_{h} \in \WH{\calE}_{h}$, where $\WH{u}^{\ell}=u$.
Than, using Cauchy-Schwarz formula in the definition of $r_{\WH{e}_{h}}$ and $l_{\WH{e}_{h}}$, we get
\begin{align*}
\norm{r_{\WH{e}_{h}}([\WH{u}-\WH{I}_h^k \WH{u}])}^2_{L^2(\Gamma_{h}^{k})} &\leq \norm{[\WH{u}-\WH{I}_h^k \WH{u}]}_{L^2(\WH{e}_{h})}\norm{\{r_{\WH{e}_{h}}([\WH{u}-\WH{I}_h^k \WH{u}]);n_{h}\}}_{L^2(\WH{e}_{h})}=0;\\
\norm{l_{\WH{e}_{h}}([\WH{u}-\WH{I}_h^k \WH{u}])}^2_{L^2(\Gamma_{h}^{k})} &\leq \norm{[\WH{u}-\WH{I}_h^k \WH{u}]}_{L^2(\WH{e}_{h})} \norm{[l_{\WH{e}_{h}}([\WH{u}-\WH{I}_h^k \WH{u}]);n_{h}]}_{L^2(\WH{e}_{h})} = 0.
\end{align*}
Then, following the proof of Lemma \ref{Lemma:boundStabGammakIP}, it is easy to obtain (\ref{eq:approxErrorInterp}) and (\ref{eq:approxErrorInterpBis}) from Lemma \ref{interpolationEstimate} and Lemma \ref{interpolationEstimateNew}.
\end{proof}

For the first term on the right-hand side of (\ref{eq:first_sec4}), we require the following \emph{perturbed} Galerkin orthogonality result:
\begin{lemma} \label{PerturbedGalerkinOrthogonality}
Let $u \in H^s(\Gamma)$, $s \geq 2$, and $u_{h} \in \WH{S}_{hk}$ denote the solutions to (\ref{eq:weakH1}) and (\ref{eq:PrimalFormulation}), respectively. We define
the functional $E_h$ on $\WH{S}^{\ell}_{hk}$ by
\begin{equation*} 
E_{h}(v_h^{\ell})  = \AC(u-u^{\ell}_{h},v^{\ell}_{h}).
\end{equation*}

Then, for all surface DG methods apart from LDG, $E_{h}$ can be written as
\begin{align}\label{eq:errorFunctional1}
E_{h}(v_{h}^{\ell}) =& \sum_{\WH{K}_{h}^{\ell} \in \WH{\calT}^{\ell}_{h}}\int_{\WH{K}_{h}^{\ell}}(R_{h}-P)\nabla_{\Gamma}u_{h}^{\ell} \cdot\nabla_{\Gamma}v_{h}^{\ell}+\left(\delta_{h}^{-1}-1\right)u_{h}^{\ell}v_{h}^{\ell} + \left(1-\delta_{h}^{-1}\right)fv_{h}^{\ell} \dA \notag \\
&+ \sum_{\WH{e}_{h}^{\ell} \in \WH{\calE}_{h}^{\ell}}\int_{\WH{e}_{h}^{\ell}} [u_{h}^{\ell}]\left(\{ \nabla_{\Gamma}v_{h}^{\ell}; n \} - \{ \delta_{\WH{e}_{h}}^{-1} P_{h}(I-dH)P\nabla_{\Gamma}v_{h}^{\ell}; n_{h}^{\ell} \} \right) \ds \notag \\ 
&+ \sum_{\WH{e}_{h}^{\ell} \in \WH{\calE}^{\ell}_{h}}\int_{\WH{e}_{h}^{\ell}} [v_{h}^{\ell}]\left(\{ \nabla_{\Gamma}u_{h}^{\ell}; n \} - \{ \delta_{\WH{e}_{h}}^{-1} P_{h}(I-dH)P\nabla_{\Gamma}u_{h}^{\ell}; n_{h}^{\ell} \} \right) \ds
\end{align}
where $R_{h}$ is given as in Lemma \ref{Gamma2GammahSmall}. The functional corresponding to the surface LDG method can be written as
\begin{align}\label{eq:errorFunctional2}
E_{h}(v_{h}^{\ell}) =& ( \ref{eq:errorFunctional1} \notag ) + \sum_{\WH{e}_{h}^{\ell} \in \WH{\calE}^{\ell}_{h}}\int_{\WH{e}_{h}^{\ell}}\delta_{\WH{e}_{h}}^{-1}\beta\cdot n_{h}^{\ell +}[v_{h}^{\ell}]\left( [ \nabla_{\Gamma}u_{h}^{\ell}; n ] - [ P_{h}(I-dH)P\nabla_{\Gamma}u_{h}^{\ell}; n_{h}^{\ell} ] \right) \ds \notag \\
&+ \sum_{\WH{e}_{h}^{\ell} \in \WH{\calE}^{\ell}_{h}}\int_{\WH{e}_{h}^{\ell}}\delta_{\WH{e}_{h}}^{-1}\beta\cdot n_{h}^{\ell +}[u_{h}^{\ell}]\left( [ \nabla_{\Gamma}v_{h}^{\ell}; n ] - [ P_{h}(I-dH)P\nabla_{\Gamma}v_{h}^{\ell}; n_{h}^{\ell} ] \right) \ds.
\end{align}

Furthermore,
\begin{equation} \label{eq:Eh_quad}
|E_{h}(v_{h}^{\ell})| \lesssim h^{k+1}\norm{f}_{L^{2}(\Gamma)}\norm{v_{h}^{\ell}}_{DG}.
\end{equation}
\end{lemma}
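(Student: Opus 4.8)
The plan is to use linearity of $\AC(\cdot,\cdot)$ together with the two ``exactness'' identities at our disposal: the consistency relation (\ref{eq:InteriorPenaltyGamma}) on the continuous side and the discrete scheme (\ref{eq:PrimalFormulation}) on the discrete side. Writing $E_h(v_h^\ell)=\AC(u,v_h^\ell)-\AC(u_h^\ell,v_h^\ell)$, I would first invoke (\ref{eq:InteriorPenaltyGamma}) to replace $\AC(u,v_h^\ell)$ by $\sum_{\WH{K}_h^\ell\in\WH{\calT}_h^\ell}\int_{\WH{K}_h^\ell} f v_h^\ell\dA$. Next, because $f_h^\ell=f$ and $\delta_h\dAhk=\dA$, the scheme (\ref{eq:PrimalFormulation}) reads $\ADG(u_h,v_h)=\sum_{\WH{K}_h^\ell\in\WH{\calT}_h^\ell}\int_{\WH{K}_h^\ell}\delta_h^{-1}f v_h^\ell\dA$. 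Inserting $\pm\ADG(u_h,v_h)$ then splits $E_h$ into a \emph{data error} $\sum_{\WH{K}_h^\ell\in\WH{\calT}_h^\ell}\int_{\WH{K}_h^\ell}(1-\delta_h^{-1})f v_h^\ell\dA$, which is exactly the third summand in the first line of (\ref{eq:errorFunctional1}), and a \emph{geometric consistency error} $\ADG(u_h,v_h)-\AC(u_h^\ell,v_h^\ell)$.

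The form (\ref{eq:errorFunctional1}) is then obtained by lifting $\ADG(u_h,v_h)$ term by term onto $\Gamma$ and subtracting $\AC(u_h^\ell,v_h^\ell)$. For the volume contribution I would apply (\ref{eq:liftingOfStiffnessTerm}); its difference with the corresponding term of $\AC$ produces $(R_h-P)\nabla_{\Gamma}u_h^\ell\cdot\nabla_{\Gamma}v_h^\ell+(\delta_h^{-1}-1)u_h^\ell v_h^\ell$, using that $P\nabla_{\Gamma}w^\ell=\nabla_{\Gamma}w^\ell$. The penalty/stabilization edge terms cancel \emph{exactly}: under $\delta_{\WH{e}_h}\dshk=\ds$ the lift of $\int_{\WH{e}_h}\beta_{\WH{e}_h}[u_h][v_h]\dshk$ coincides with the $S_h^\ell$-term built into $\AC$ via (\ref{S3})--(\ref{S4}), and likewise the lifting-operator volume terms $r_h,l_h$ cancel by construction of $\AC$. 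This structural cancellation is essential, since it removes any $O(1)$ contribution. Finally, lifting the consistency fluxes via $\nabla_{\Gamma_h^k}v_h=P_h(I-dH)P\nabla_{\Gamma}v_h^\ell$ and $\delta_{\WH{e}_h}\dshk=\ds$ and subtracting the fluxes of $\AC$ leaves precisely the two edge integrals of (\ref{eq:errorFunctional1}). For LDG the extra $\beta\cdot n_h^+$ fluxes are handled identically, yielding the two additional edge integrals of (\ref{eq:errorFunctional2}); once more the $r_h,l_h$ volume terms and the $\beta_{\WH{e}_h}$ penalty cancel by the definition (\ref{eq:LDGGammaForm}) of the lifted LDG form.

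For the bound (\ref{eq:Eh_quad}) I would estimate each surviving term. The two volume terms are controlled by $\norm{P-R_h}_{L^\infty(\Gamma_h^k)}\lesssim h^{k+1}$ (\ref{4rel}) and $\norm{1-\delta_h}_{L^\infty(\Gamma_h^k)}\lesssim h^{k+1}$ (\ref{2rel}), giving $\lesssim h^{k+1}\normDG{u_h^\ell}\normDG{v_h^\ell}$, while the data error, by the same $\delta_h$ bound, is directly $\lesssim h^{k+1}\norm{f}_{L^2(\Gamma)}\normh{v_h^\ell}$. The crux is the edge terms. Using $P\nabla_{\Gamma}v_h^\ell=\nabla_{\Gamma}v_h^\ell$, $P_h n_h=n_h$, the distance bound $\norm{d}_{L^\infty(\Gamma_h^k)}\lesssim h^{k+1}$ (\ref{1rel}), the conormal estimate $\norm{n^{+/-}-Pn_h^{+/-}}_{L^\infty(\WH{\calE}_h)}\lesssim h^{k+1}$ (\ref{7rel}) and $\norm{1-\delta_{\WH{e}_h}}_{L^\infty(\WH{\calE}_h)}\lesssim h^{k+1}$ (\ref{5rel}), I would show pointwise that $\abs{\{\nabla_{\Gamma}v_h^\ell;n\}-\{\delta_{\WH{e}_h}^{-1}P_h(I-dH)P\nabla_{\Gamma}v_h^\ell;n_h^\ell\}}\lesssim h^{k+1}\abs{\nabla_{\Gamma}v_h^\ell}$, and analogously for the jump expressions in the LDG case. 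Each edge integral is then $\lesssim h^{k+1}\norm{[u_h^\ell]}_{L^2(\WH{e}_h^\ell)}\norm{\nabla_{\Gamma}v_h^\ell}_{L^2(\WH{e}_h^\ell)}$; applying the trace inequality of Lemma \ref{traceInequalityGammahk} (in lifted form) to replace the edge norm of $\nabla_{\Gamma}v_h^\ell$ by $h^{-1/2}$ times its element norm, and absorbing the factor $h^{1/2}$ from $\beta_{\WH{e}_h}^{-1/2}\sim h^{1/2}$ into $\norm{[u_h^\ell]}$, the powers of $h$ combine so that summation over edges and Cauchy--Schwarz give $\lesssim h^{k+1}\normast{u_h^\ell}\normh{v_h^\ell}$ (for the methods stabilized by (\ref{S2}) one uses Lemma \ref{re&leToAverage} together with (\ref{relationrhre}), exactly as in the boundedness proof). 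Collecting all contributions yields $\abs{E_h(v_h^\ell)}\lesssim h^{k+1}(\normDG{u_h^\ell}+\norm{f}_{L^2(\Gamma)})\normDG{v_h^\ell}$, and the stability estimate (\ref{eq:stabilityEstimateGamma2}), $\normDG{u_h^\ell}\lesssim\norm{f}_{L^2(\Gamma)}$, converts this into the claimed (\ref{eq:Eh_quad}). The main obstacle is precisely this edge-term bookkeeping: one must verify that the geometric perturbation of the conormal fluxes is genuinely $O(h^{k+1})$ rather than merely $O(h^k)$ (which is all that a crude use of $\norm{\nu-\nu_h}_{L^\infty(\Gamma_h^k)}\lesssim h^k$ from (\ref{3rel}) would give), and that the $h^{-1/2}$ loss in the trace inequality is exactly offset by the penalty scaling, so that no residual negative power of $h$ remains.
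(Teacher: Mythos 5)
Your proposal is correct and follows essentially the same route as the paper, which itself only sketches the argument by pointing to Lemma 4.2 of \cite{dedner2013surfaces}: use consistency \eqref{eq:InteriorPenaltyGamma} and the lifted scheme \eqref{eq:PrimalFormulation} to isolate the data error and the geometric consistency error, lift $\ADG$ term by term as in \eqref{eq:liftingOfStiffnessTerm}, and estimate via Lemma \ref{Gamma2GammahSmall}. Your version is in fact more explicit than the paper's, and correctly identifies the two delicate points (the need for the $O(h^{k+1})$ conormal estimate \eqref{7rel} rather than \eqref{3rel}, and the cancellation of the $h^{-1/2}$ trace loss against the penalty scaling).
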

\begin{proof}
The proof is similar to that of Lemma 4.2 in \cite{dedner2013surfaces} which considered a piecewise linear approximation of the surface. The expression for the error functional $E_{h}$ is obtained by first noting that the solution $u$ of (\ref{eq:weakH1}) satisfies (\ref{eq:InteriorPenaltyGamma}) and then considering the difference between (\ref{eq:InteriorPenaltyGamma}) and (\ref{eq:PrimalFormulation}). This is done by first surface lifting the terms of (\ref{eq:PrimalForm}) onto $\Gamma$ in a similar fashion to (\ref{eq:liftingOfStiffnessTerm}). The estimate (\ref{eq:Eh_quad}) is then obtained by making use of the geometric estimates in Lemma \ref{Gamma2GammahSmall}. 
\end{proof}

\begin{remark}
Note that the error functional $E_{h}$ in Lemma \ref{PerturbedGalerkinOrthogonality} includes all of the terms present in the high order surface FEM setting (see \cite{demlow2009higher}) as well as additional terms arising from the surface DG methods. 
\end{remark}

\begin{proof}[Proof of Theorem \ref{aprioriErrorEstimateIP}]
Choosing the continuous interpolant $\phi_{h}^{\ell} = I^k_hu$, using the interpolation estimate (\ref{eq:approxErrorInterp})  
and the error functional estimate (\ref{eq:Eh_quad}), (\ref{eq:first_sec4}) can be bounded by
\begin{align*}
\norm{I^k_hu- u_{h}^{\ell}}_{DG}^{2}  &\lesssim 
    E_{h}(I^k_hu-u_{h}^{\ell}) + \AC(I^k_hu-u,I^k_hu-u_{h}^{\ell})\\  
&\lesssim h^{k+1}\norm{f}_{L^{2}(\Gamma)}\norm{I^k_hu-u_{h}^{\ell}}_{DG} +  h^{k} \norm{u}_{H^{k+1}(\Gamma)} \norm{I^k_hu-u_{h}^{\ell}}_{DG} \notag, 
\end{align*} 
which implies
\[ \norm{I^k_hu- u_{h}^{\ell}}_{DG} \lesssim h^k(\norm{f}_{L^{2}(\Gamma)} + \norm{u}_{H^{k+1}(\Gamma)}). \]
Recalling that $u-I^k_hu\in C^0(\Gamma)$, using Lemma~\ref{interpolationEstimate} we obtain
\begin{align*}
  \norm{u-u_h^{\ell}}_{DG} &\leq \norm{u-I^k_hu}_{DG} + \norm{I^k_hu- u_{h}^{\ell}}_{DG} 
 \lesssim h^k(\norm{f}_{L^{2}(\Gamma)}+ \norm{u}_{H^{k+1}(\Gamma)}).
\end{align*}
This concludes the first part of the proof.
In the case of $\eta=1$, to derive the $L^{2}$ estimate we first observe that the solution $z \in H^{2}(\Gamma)$ to the dual problem
\begin{equation}\label{eq:DualEllipticGamma}
-\Delta_{\Gamma} z + z = u-u_h^{\ell}
\end{equation}
satisfies
\begin{equation}\label{eq:DualAssumption}
\norm{z}_{H^{2}(\Gamma)} \lesssim  \norm{u-u_h^{\ell}}_{L^{2}(\Gamma)}.
\end{equation}
Then, using the symmetry of the bilinear function $\AC(\cdot,\cdot)$, we have that
\begin{align}\label{l2errorSquared}
\norm{u - u_{h}^{\ell}}_{L^{2}(\Gamma)}^2&=(u - u_{h}^{\ell},u - u_{h}^{\ell})_{\Gamma}=\AC(z,u - u_{h}^{\ell})\notag \\
&=\AC(u - u_{h}^{\ell},z) =\AC(u - u_{h}^{\ell},z-I_h^kz)+E_h(I_h^kz).
\end{align}
Using (\ref{eq:Eh_quad}), a triangle inequality and the interpolation estimate in Lemma \ref{interpolationEstimate}, we obtain
$$|E_h(I_h^kz)|\lesssim  h^{k+1} \norm{f}_{L^2(\Gamma)} \norm{I_h^k z}_{H^{1}(\Gamma)}\lesssim  h^{k+1} \norm{f}_{L^2(\Gamma)} \norm{z}_{H^{2}(\Gamma)}.$$
Hence, using (\ref{eq:DualAssumption}),
\begin{equation*}
|E_h(I_h^kz)|\lesssim  h^{k+1} \norm{f}_{L^2(\Gamma)} \norm{u - u_{h}^{\ell}}_{L^{2}(\Gamma)}
\end{equation*}
Making use of the continuity of $I_h^k z-z$ and $I_h^k u-u$, with the symmetry of the bilinear function $\AC(\cdot,\cdot)$, Lemma \ref{ApproxErrorIterpLemma} and the stability estimate (\ref{eq:DualAssumption}) we get
\begin{align*}
\AC(u - u_{h}^{\ell},z-I_h^kz&)=\AC(z-I_h^kz,u - u_{h}^{\ell}) \\
&\lesssim \AC(z-I_h^kz,I_h^k u - u_{h}^{\ell})+\AC(z-I_h^kz,u - I_h^k u)\\
&\lesssim h\norm{z}_{H^2(\Gamma)}\norm{I_h^k u - u_{h}^{\ell}}_{DG} + h^{k+1}\norm{z}_{H^2(\Gamma)} \norm{u}_{H^{k+1}(\Gamma)}\\
&\lesssim h \norm{z}_{H^2(\Gamma)}(\norm{I_h^k u - u}_{DG} + \norm{u - u_{h}^{\ell}}_{DG})  + h^{k+1}\norm{z}_{H^2(\Gamma)} \norm{u}_{H^{k+1}(\Gamma)}\\
&\lesssim (h^{k+1}\norm{u}_{H^{k+1}(\Gamma)} + h\norm{u - u_{h}^{\ell}}_{DG}) \norm{u - u_{h}^{\ell}}_{L^{2}(\Gamma)}.
\end{align*}
Combining the last two inequalities with (\ref{l2errorSquared}) yields
\begin{align*}
\norm{u - u_{h}^{\ell}}_{L^{2}(\Gamma)}^2&\lesssim \left( h\norm{u - u_{h}^{\ell}}_{DG} + h^{k+1} (\norm{f}_{L^2(\Gamma)}+\norm{u}_{H^{k+1}(\Gamma)})\right) \norm{u - u_{h}^{\ell}}_{L^{2}(\Gamma)},
\end{align*}
which gives us the desired $L^{2}$ estimate and concludes the proof.
In the case of $\eta=0$, we can obtain a sub-optimal bound using a similar procedure to that of \cite{arnold2002unified}.
\end{proof}

\section*{Acknowledgements}
This research has been supported by the British Engineering and Physical Sciences Research Council (EPSRC), Grant EP/H023364/1.
\FloatBarrier

\appendix
\section*{}

This section is devoted to prove Lemma \ref{Gamma2GammahSmall}.
\begin{proof}[Proof of Lemma \ref{Gamma2GammahSmall}]
Proofs of (\ref{1rel})-(\ref{4rel}) can be found in \cite[Prop. 2.3 and Prop. 4.1]{demlow2009higher}. The proof of (\ref{6rel}) will follow exactly the same lines as (\ref{4rel}) once we have proven (\ref{5rel}). Let $e$, $K$ be the reference segment [0,1] and the (flat) reference element, respectively, and let
$\WT{K}_h$, $\WH{K}_h$ and $\WH{K}_h^{\ell}$  be  elements in $\Gamma_h$, $\Gamma_{h}^{k}$ and $\Gamma$, respectively, such that $\pi_k(\WT{K}_h)=\WH{K}_h$ and $\pi(\WH{K}_{h})=\WH{K}_h^{\ell}$. Let also $L_e$ be the inclusion operator that maps $e$ into an edge of $K$ and let $L_{\WT{K}_h}(K) =\WT{K}_h$.
\begin{figure}[!hbt]
\begin{center}
\includegraphics[scale=0.9]{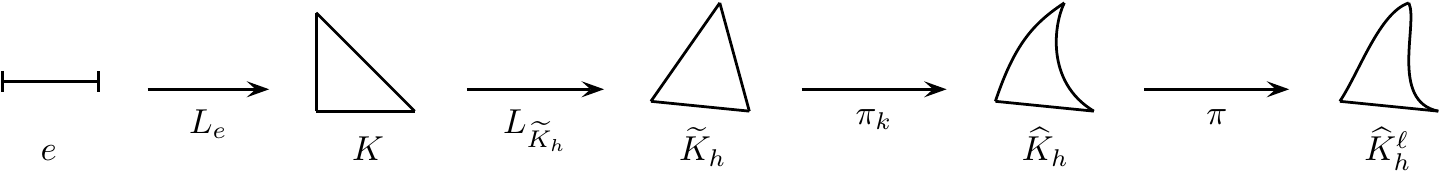}
\caption{Mappings used in the proof of Lemma \ref{Gamma2GammahSmall}.}\label{img:Lemma4_1}
\end{center}
\end{figure}
A tangent on an edge $\WH{e}_{h}\subset \WH{K}_h$ in $\Gamma_{h}^{k}$ is given by $\tau_{h} = \nabla(\pi_k\circ L_{\WT{K}_h}\circ L_e)$.
Analogously,  a tangent on the surface lifted edge $\WH{e}_{h}^{\ell}\subset \WH{K}_h^{\ell}$ in $\Gamma$ is given by $\tau = \nabla \pi \tau_{h} $.
We denote by $\overline{\tau}_{h}$ and $\overline{\tau}$ respectively the unit tangents of $\WH{e}_{h}$ and $\WH{e}_{h}^{\ell}$, and let $\lambda=\norm{\tau_{h}}_{l^2}$.
We will now prove estimate (\ref{5rel}). Let$\dx$ be the Lebesque measure on the reference interval $e$. We then have
\begin{align*}
\dshk&=\lambda \dx,\\
\ds & =\sqrt{\norm{(\nabla \pi \tau_{h})^T\cdot \nabla \pi \tau_{h}}_{l^2}} \dx =\lambda \sqrt{ \norm{(\nabla \pi \overline{\tau}_{h})^T\cdot \nabla \pi\overline{\tau}_{h}}_{l^2}} \dx = \underbrace{\norm{\nabla \pi \overline{\tau}_{h}}_{l^2}}_{\delta_{\WH{e}_{h}}}\dshk.
\end{align*}
Having characterised $\delta_{\WH{e}_{h}}$, we wish to show that
\[1 - Ch^{k+1}\leq\norm{\nabla \pi\overline{\tau}_{h}}_{l^2}\leq 1 + Ch^{k+1}.\]
Making use of (\ref{Da}) and (\ref{1rel}), we have that
\begin{equation}
\label{partialEstimateDa}
\norm{\nabla \pi \overline{\tau}_{h}}_{l^2}\leq\norm{\nabla \pi}_{l^2} \norm{\overline{\tau}_{h}}_{l^2}\leq\norm{P-dH}_{l^2}\leq 1+Ch^{k+1}.
\end{equation}
Next, to provide a lower bound for $\norm{\nabla \pi \overline{\tau}_{h}}_{l^2}$, we consider
$$\tau -\tau_{h}=(\nabla \pi-P_h)\tau_{h}=\lambda(\nabla \pi-P_h)\overline{\tau}_{h}.$$
Recalling the definition of the projection matrices $P$ and $P_h$, we have that
$$\norm{\tau -\tau_{h}}_{l^2} \leq \lambda \norm{(P-P_h)-dH}_{l^2} \norm{\overline{\tau}_{h}}_{l^2} \leq \lambda C h^{k}.$$
Using the reverse triangle inequality, we obtain
\begin{equation}\label{eq:lowerBoundDpi}
\lambda \norm{\nabla \pi\overline{\tau}_{h}}_{l^2}=\norm{\tau}_{l^2}\geq \norm{\tau_{h}}_{l^2} - \norm{\tau -\tau_{h}}_{l^2}\geq \lambda (1 - Ch^{k})
\end{equation}
and, dividing by $\lambda$ and using (\ref{partialEstimateDa}), we obtain the sub-optimal estimate
\begin{equation}
\label{estimateDa1}
1 - Ch^{k}\leq\norm{\nabla \pi\overline{\tau}_{h}}_{l^2}\leq 1 + Ch^{k+1}.
\end{equation}
The lower bound $\ref{estimateDa1}$ can be improved in an iterative way as follows. We consider
\begin{equation}\label{eq:lowerBound-1}
\lambda \norm{\nabla \pi\overline{\tau}_{h}}_{l^2}=\norm{\tau}_{l^2}\geq \norm{P\tau_{h}}_{l^2} - \norm{P\tau_{h} - \tau}_{l^2}.
\end{equation}
Then, using again the reverse triangular inequality, we have that
\begin{equation}\label{eq:lowerBound1}
\norm{P\tau_{h}}_{l^2}=\lambda\norm{P\overline{\tau}_{h}}_{l^2}\geq\lambda(\norm{\overline{\tau}}_{l^2}-\norm{\overline{\tau} -P\overline{\tau}_{h}}_{l^2})=\lambda(1-\norm{\overline{\tau} -P\overline{\tau}_{h}}_{l^2}).
\end{equation}
Since $\overline{\tau}, n,\nu$ form an orthonormal basis of $\mathbb{R}^3$ and recalling that $P$ maps vectors into the tangential space of $\Gamma$ (hence have null normal component), we get
\begin{align}
\lambda(1-\norm{\overline{\tau} -P\overline{\tau}_{h}}_{l^2}) &= \lambda(1-\norm{1-(\overline{\tau},P\overline{\tau}_{h})\overline{\tau} - (n,P\overline{\tau}_{h})n }_{l^2})\notag \\
&\geq \lambda(1-\norm{(1-(\overline{\tau},\overline{\tau}_{h}))}_{l^2}- \norm{(n,\overline{\tau}_{h}) }_{l^2})\notag \\
&\geq \lambda(1-\norm{\overline{\tau} -\overline{\tau}_{h}}_{l^2}^2- \norm{(n,\overline{\tau}_{h}) }_{l^2}). \label{estimateNorm}
\end{align}
Now 
$$\overline{\tau}_{h}-\overline{\tau} = (P_h-\frac{\nabla \pi}{\norm{\nabla \pi\overline{\tau}_{h}}_{l^2}})\overline{\tau}_{h},$$
so using (\ref{estimateDa1}) and a Taylor expansion argument, it is easy to see that 
\begin{equation}\label{eq:ineq_diff_unitary_tangent}
\norm{\overline{\tau}_{\WH{e}_{h}}-\overline{\tau}_{\WH{e}_{h}^{\ell}}}_{l^2}\lesssim  h^{k}.
\end{equation}
To deal with the last term of (\ref{estimateNorm}) we note that
\begin{equation*}
(n,\overline{\tau}_{h}) = (\overline{\tau} \times \nu,\overline{\tau}_{h})=(\nu,\overline{\tau}_{h} \times \overline{\tau})=(\nu,\overline{\tau}_{h} \times \frac{\nabla \pi\overline{\tau}_{h}}{\norm{\nabla \pi\overline{\tau}_{h}}_{l^2}}).
\end{equation*}
Then, using the sub-optimal lower bound (\ref{estimateDa1}) and a Taylor expansion argument, we get
\begin{equation*}
(\nu,\overline{\tau}_{h} \times \frac{\nabla \pi\overline{\tau}_{h}}{\norm{\nabla \pi\overline{\tau}_{h}}_{l^2}})=\frac{1}{\norm{\nabla \pi\overline{\tau}_{h}}_{l^2}}(\nu,\overline{\tau}_{h} \times \nabla \pi\overline{\tau}_{h})\lesssim (\nu,\overline{\tau}_{h} \times \nabla \pi\overline{\tau}_{h}).
\end{equation*}
Using the definition of $P$ and (\ref{Da}), we have that
\begin{equation}
\label{DaTbarIneq}
\nabla \pi\overline{\tau}_{h}=(P-dH)\overline{\tau}_{h}=\overline{\tau}_{h}-(\nu\cdot \overline{\tau}_{h})\nu-dH\overline{\tau}_{h}.
\end{equation}
Now, using (\ref{DaTbarIneq}), we can write
\begin{align*}
&(\nu,\overline{\tau}_{h} \times \nabla \pi\overline{\tau}_{h})=\bigg(\nu,\overline{\tau}_{h} \times (\overline{\tau}_{h} - (\overline{\tau}_{h}\cdot \nu)\nu - dH\overline{\tau}_{h})\bigg)=-(\nu,\overline{\tau}_{h} \times dH\overline{\tau}_{h}).
\end{align*}
Hence,
\begin{equation}\label{estimateNormN}
\norm{(n,\overline{\tau}_{h}) }_{l^2}\lesssim \norm{d}_{L^\infty}\norm{(\nu,\overline{\tau}_{h} \times H\overline{\tau}_{h})}_{l^2}\lesssim h^{k+1}.
\end{equation}
Combining (\ref{estimateNormN}) and (\ref{eq:ineq_diff_unitary_tangent}) with (\ref{estimateNorm}) we obtain that
\begin{equation}\label{eq:lowerBound-2}
\norm{P\tau_{h}}_{l^2}\geq\lambda(1-\norm{(1-(\overline{\tau},P\overline{\tau}_{h}))\overline{\tau} - (n,P\overline{\tau}_{h})n }_{l^2})\geq\lambda(1-Ch^{k+1}).
\end{equation}
For the second term in the right-hand side of (\ref{eq:lowerBound-1}), notice that
\begin{equation}\label{eq:lowerBound3}
\norm{\tau -P \tau_{h}}_{l^2}= \norm{\nabla \pi \tau_{h} -P \tau_{h}}_{l^2} = \norm{dH\tau_{h}}_{l^2}\leq \lambda C h^{k+1}.
\end{equation}
We are now ready to improve the lower bound in $\ref{estimateDa1}$. By making use of (\ref{eq:lowerBound3}) and (\ref{eq:lowerBound-2}) in (\ref{eq:lowerBound-1}), we get
\begin{equation}\label{eq:lowerBoundTau}
\norm{\nabla \pi\overline{\tau}_{h}}_{l^2}\geq 1 - Ch^{k+1}
\end{equation}
which proves (\ref{5rel}).

To prove (\ref{7rel}), we need to preliminary prove the following auxiliary inequalities:
\begin{equation} \label{eq:ineq_tangent_conormal}
|(\overline{\tau} , n_{h})| \lesssim  h^{k+1},
\end{equation}
\begin{equation} \label{eq:ineq_conormal_conormal}
|1 - (n , n_{h})| \lesssim  h^{2k}.
\end{equation}
We start showing (\ref{eq:ineq_tangent_conormal}). Using the property of the cross product, we get
\begin{equation}
\label{CrossProductIneq}
(\overline{\tau} , n_{h})=(\overline{\tau} , \nu_h \times \overline{\tau}_{h})=(\nu_h, \overline{\tau}_{h} \times \overline{\tau})=(\nu_h, \overline{\tau}_{h} \times \nabla \pi\overline{\tau}_{h}).
\end{equation}
Replacing (\ref{DaTbarIneq}) in (\ref{CrossProductIneq}), we obtain
$$(\overline{\tau} , n_{h})=[\nu\cdot (\overline{\tau}_{h}-\overline{\tau})](\overline{\tau}_{h},\nu \times \nu_h)-(\nu_h,\overline{\tau}_{h}\times dH\overline{\tau}_{h}).$$
Taking the absolute value and using (\ref{1rel}), (\ref{3rel}) and (\ref{eq:ineq_diff_unitary_tangent}), we find
$$|(\overline{\tau} , n_{h})|\lesssim  h^{2k+1}+C h^{k+1}\lesssim  h^{k+1}.$$
In order to prove (\ref{eq:ineq_conormal_conormal}), we start showing that the following holds
\begin{equation}\label{eq:ineq_nu_conormal}
|(\nu,n_{h})|\lesssim  h^k.
\end{equation}
Indeed, using again the properties of the cross and scalar products, we obtain:
$$|(\nu,n_{h})|=|(\nu,\nu_h\times\overline{\tau}_{h})|= |(\nu_h,\overline{\tau}_{h}\times \nu)|= |(\nu_h,\overline{\tau}_{h}\times (\nu-\nu_h))|\lesssim  h^k.$$
Since the vector $n_{h}$ is of unit length, there exist $a(x),b(x),c(x)\in\mathbb{R}$ satisfying\\ $a^2+b^2+c^2=1$ such that
$$n_{h}=a \overline{\tau} + b n + c \nu,$$
where $a=(\overline{\tau},n_{h})$, $b=(n,n_{h})$ and $c=(\nu,n_{h})$.
Hence, using (\ref{eq:ineq_tangent_conormal}), (\ref{eq:ineq_nu_conormal}) and a Taylor expansion argument, we get
$$b=\pm\sqrt{1-a^2-c^2}=\pm\sqrt{1+O(h^{2k})}=\pm1+O(h^{2k}).$$
The inequality (\ref{eq:ineq_conormal_conormal}) follows by assuming that the mesh size $h$ of $\WH{\calT}_h$ is chosen small enough so that $b = 1+O(h^{2k})$. Finally, writing $P n_{h} = (\overline{\tau},P n_{h})\overline{\tau} + (n,P n_{h}) n$, we obtain (\ref{7rel}), i.e.,
\begin{align*}
\norm{ n -P n_{h}}_{L^{\infty}(\WH{e}_{h})} 
&= \norm{n - (\overline{\tau},P n_{h})\overline{\tau} + (n,P n_{h})n}_{L^{\infty}(\WH{e}_{h})} \\ 
& \leq |1 - (n,P n_{h})| + |(\overline{\tau},P n_{h})| \\
&= |1 - ( n,n_{h})| + |(\overline{\tau},n_{h})| = O(h^{k+1}).
\end{align*}\end{proof}

\bibliography{SINUM-refs}

\end{document}